\pgfplotsset{compat=newest}
\pgfplotsset{plot coordinates/math parser=false}
\newtheorem{theorem}{Theorem}
\newtheorem{problem}[theorem]{Problem}
\newtheorem{corollary}[theorem]{Corollary}
\newenvironment{proof}{\noindent {\bf Proof:}}{
 {\hfill{\vrule height 6pt width 6pt depth 0pt}}\newline}
\newtheorem{lemma}[theorem]{Lemma}
\newtheorem{remark}[theorem]{Remark}
\newtheorem{definition}[theorem]{Definition}
\def\lhb#1{\mathcal{L}(#1)} 
\def\rhb#1{\mathcal{R}(#1)}
\def\mean#1{\langle #1 \rangle}
\def\csd{C_{SD}}
 \newdimen\dummy
\newcommand{\Index}{{\mathcal I}}
\newcommand{\argmin}{\mathop{\rm argmin}}
\newcommand{\Ind}{\mathcal{I}}
\newcommand{\R}{\ensuremath{\mathbb{R}}}
\def\plottitlefontsize{\footnotesize}
\def\plotlabelfontsize{\normalsize}
\def\plotlegendfontsize{\scriptsize}
\def\plottickfontsize{\small}
\def\plotlinewidth{1.0pt}
\begin{document}
\title{Alternating Least Squares Tensor Completion in the TT-Format}
\author{
  Lars Grasedyck\thanks{Institut f\"ur Geometrie und Praktische Mathematik, 
    RWTH Aachen, Templergraben 55, 52056 Aachen, Germany. 
    Email: {\tt \{lgr,kluge,kraemer\}@igpm.rwth-aachen.de}. All three authors gratefully acknowledge support by the
    DFG priority programme 1324 under grant GR3179/2-2, the first and last author 
    gratefully acknowledge support by the DFG priority programme 1648 under grant GR3179/3-1.
}
  \and
  Melanie Kluge$^*$
  \and 
  Sebastian Kr\"amer$^*$ \\
  \small{(Accepted for publication in
SIAM Journal on Scientific Computing (SISC))}
}

\maketitle
\sloppy

\begin{abstract}
  We consider the problem of fitting a low rank tensor $A\in\mathbb{R}^{\Index}$,
  $\Index = \{1,\ldots,n\}^{d}$, to a given set of data points 
  $\{M_i\in\mathbb{R}\mid i\in P\}$, $P\subset\Index$.
  The low rank format under consideration is the hierarchical or TT or MPS format. It is 
  characterized by rank bounds $r$ on certain matricizations of the tensor. The number of degrees 
  of freedom is in ${\cal O}(r^2dn)$. 
  For a fixed rank and mode size $n$ we observe that it is possible to reconstruct 
  random (but rank structured) tensors as well as certain discretized multivariate 
  (but rank structured) functions  
  from a number of samples that is in ${\cal O}(\log N)$ for a tensor having $N=n^d$ entries.
  We compare an alternating least squares fit (ALS) to an overrelaxation scheme inspired by the 
  LMaFit method for matrix completion. 
  Both approaches aim at finding a tensor $A$ that fulfils the first order optimality conditions by a 
  nonlinear Gauss-Seidel type solver that consists of an alternating fit cycling through the 
  directions $\mu=1,\ldots,d$. The least squares fit is of complexity ${\cal O}(r^4d\#P)$ per step, 
  whereas each step of ADF is in ${\cal O}(r^2d\#P)$, albeit with a slightly higher number of necessary steps.
  In the numerical experiments we observe robustness of the completion algorithm with respect 
  to noise and good reconstruction capability. Our tests provide evidence that the algorithm is 
  suitable in higher dimension ($>$10) as well as for moderate ranks.  

  Keywords: MPS, Tensor Completion, Tensor Train, TT, Hierarchical Tucker, HT, ALS.\\
  MSC: 15A69, 65F99
\end{abstract}

\section{Introduction}\label{Sec:intro}

We consider the problem of fitting a low rank tensor 
\[A\in\mathbb{R}^{\Index},\quad \Index := \Index_1\times\cdots\times\Index_d,\quad \Index_{\mu}:=\{1,\ldots,n_{\mu}\},\quad
\mu\in D:=\{1,\ldots,d\},\]
to given data points 
\[\{M_i \in\mathbb{R}\mid i\in P\},\quad P\subset\Index,\qquad \#P\ge \sum_{\mu=1}^{d} n_\mu,\]
by minimizing the distance between the given values $(M_i)_{i\in P}$ and approximations
$(A_i)_{i\in P}$:
\[ A = \mathop{\rm argmin}_{\tilde{A}\in T}\sum_{i\in P}(M_i-\tilde{A}_i)^{2}\qquad\qquad \mbox{($T$ being a certain tensor class)}\]
In the class of general dense tensors this is trivial, because the entries of the tensor
are all independent. For sparse tensors this reduces to a simple knapsack problem. 
Our target tensor class is the set of low rank tensors, i.e., we assume that the implicitly given tensor 
$M \in\mathbb{R}^{\Index}$
allows for a low rank approximation
\[ \|M-\tilde{M}\| \le \varepsilon,\qquad \varepsilon\in\mathbb{R}_{\ge 0},\]
where the unknown approximant $\tilde{M}\in\mathbb{R}^{\Index}$ fulfils certain rank bounds that will be
introduced later. In particular we allow $\varepsilon=0$ so that the task is to  
reconstruct the whole tensor $M=\tilde{M}$ in the low rank format. 
This particular case is considered, e.g. in \cite{LiuMuWoYe09,GaReYa11}.

\subsection{Completion versus Sampling}

A tensor fitting problem might arise as follows: the entries $(M_i)_{i\in P}$ could be measurements of a 
multiparameter model such that each index $i\in P$ represents a specific choice of 
$d$ parameters. If the measurements are incomplete or in parts known to be incorrect,
then the goal is to reconstruct all values of $M$ for all parameter combinations  $i\in \Index$ from 
the known values $(M_i)_{i\in P}$ (prior to the assumption that $M$ allows for an approximation in 
the low rank format). It is crucial that the points $P$ are given and we are not free to 
choose them. In case that the points can be chosen freely one after another, the problem simplifies 
drastically and can be approached as in \cite{OsTy10,BaGrKl13} by an adaptive
sampling strategy.
Sometimes one can propose rules on how the entries from $P$ should be chosen, as it is done 
in quasi Monte Carlo methods. This approach is persued in \cite{Kl13} and defines sampling rules
that allow an efficient approximation scheme. Again, this is different and possibly a simpler task
than the tensor completion considered here.

\subsection{Low Rank Tensor Formats}

The class of tensors in which we aim for a completion of the given tensor entries is a low 
rank format. In the case $d=2$ the rank of a tensor coincides with the usual matrix rank, but in 
dimension $d>2$ there are several possibilities to define the rank of a tensor and thus there are 
several data-sparse low rank formats available.

In the CP($k$) format\footnote{CP stands for canonical polyadic, in the literature also called 
CANDECOMP and PARAFAC} or representation
\[
A  = \sum_{\ell=1}^{k} \mathop{\otimes}\limits_{\mu=1}^{d} g_{\mu,\ell},\qquad
A_{i_1,\ldots,i_d} = \sum_{\ell=1}^{k} \prod_{\mu=1}^{d} g_{\mu,\ell}(i_\mu),\qquad g_{\mu,\ell}(i_\mu)\in\mathbb{R},
\]
the tensor completion problem has been considered in \cite{ToBro05,AcDuKoMob11,KriSi13}. The minimal number of summands $k$ by which 
the tensor $A$ can be represented is the \emph{tensor rank} of $A$, but minimality of $k$ is often not
relevant in applications. The CP($k$) format is data sparse in the sense that storing the factors $g_{\mu,\ell}$ amounts to 
${\cal O}(dnk)$ units (real numbers) of storage, as opposed to the $n^d$ units of the full dense and
unstructured tensor $A$. 
This is the reason for the attractivity of the format despite many theoretical and practical 
difficulties \cite{KoBad09}.

In the Tucker format 
\[
A_{i_1,\ldots,i_d} = \sum_{\ell_1=1}^{k_1}\cdots\sum_{\ell_d=1}^{k_d} C_{\ell_1,\ldots,\ell_d}\prod_{\mu=1}^{d} g_{\mu,\ell_\mu}(i_\mu),\qquad g_{\mu,\ell}(i_\mu)\in\mathbb{R},\quad
C \in \mathbb{R}^{k_1\times\cdots\times k_d},\]
tensor completion has been considered in \cite{SiDiLaSu11,KrStVa14,LiuSha13,RaSchSt13}. This format is limited to small dimensions
$d$ since the so-called core tensor $C$ requires $\prod_{\mu=1}^{d}k_\mu$ units of storage. The advantage on
the other hand is that standard matrix approximation techniques can be used by matricizing the tensor. 

The low rank format that we consider lies in between these 
two, combining the benefits of both: the number of degrees of freedom scales linearly with the dimension 
$d$ and the format is based on matricizations such that standard linear algebra tools are applicable.

Here, we put no special assumptions on the data points $P$, except that they are reasonably distributed: 
\begin{definition}[Slices and slice density]\label{def:fibre_density}
  We define the slice density of a point set $\{M_i\in\mathbb{R}\mid i\in P\}, P\subset\Index$, 
  in direction $\mu\in D$ and index {\boldmath$j_\mu$}$\in\Index_{\mu}$ by
  \[c(\mbox{\boldmath$j_\mu$}) := \#\{i\in P\mid i_\mu = \mbox{\boldmath$j_\mu$}\}\]
  The corresponding slice of a tensor $A\in\mathbb{R}^\Index$ is defined by 
  \[ A_{i_\mu = \mbox{\boldmath\scriptsize$j_\mu$}} := 
  \hat{A}\in\mathbb{R}^{\Index_1\times\cdots\times\Index_{\mu-1}\times\Index_{\mu+1}\times\cdots\times\Index_d},\qquad 
  \hat{A}_{i_1,\ldots,i_{\mu-1},i_{\mu+1},\ldots,i_d} := A_{i_1,\ldots,i_{\mu-1},\mbox{\boldmath\scriptsize$j_\mu$},i_{\mu+1},\ldots,i_d}\]
\end{definition}
Depending on the rank parameters $r_\mu$ of $A$ (which in turn depend on the target accuracy of the 
approximation) the slice densities of the set $P$ have to be high enough, i.e. 
\[ c(\mbox{\boldmath$j_\mu$}) \ge \csd r_\mu^2,\qquad \mbox{\boldmath$j_\mu$}\in\Index_{\mu},\quad \mu\in D,\]
for a constant $\csd$, the oversampling factor or overall slice density relative to the rank. 
Note that thereby, the minimal value for $\#P$ increases if any $\#\Index_{\mu}$ does.
If one of the values $c(\mbox{\boldmath$j_\mu$})$ were zero, then this simply means that the slice 
$A_{i_\mu=\mbox{\boldmath\scriptsize$j_\mu$}}$
is undetermined and not observable for any of the low rank formats mentioned above and in the following.
In a minimum norm sense the completed tensor could be set to zero for this slice without any effect on
the rank or approximation in the known points $P$.

The low rank format under consideration is the hierarchical \cite{HaKue09,Gr10} or TT \cite{OsTy09,Os11} or MPS \cite{Whi92,Vi03}
format.
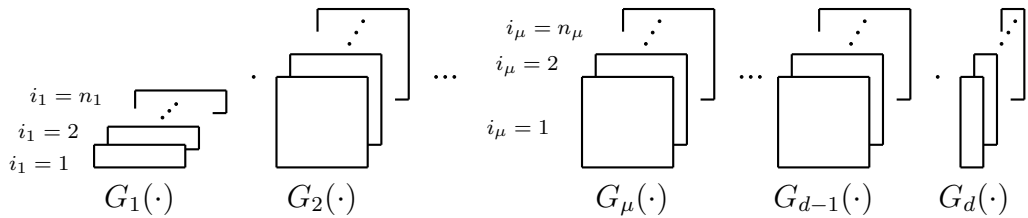
\begin{figure}[htb]
  \begin{center}
    \begin{tikzpicture}[label distance=-1mm,scale = 0.60]
      
      \draw [-, thick] (-4.0,0) -- (-4.0,0.5);
      \draw [-, thick] (-4.0,0) -- (-2.0,0);
      \foreach \x in {1,3}
      \draw [-, thick] (-4.0+ \x*0.3,0.5+\x*0.4) -- (-4.0+ \x*0.3,0.1+\x*0.4);
      \foreach \x in {1,3}
      \draw [-, thick] (-4.0+ \x*0.3+1.7,\x*0.4) -- (-4.0+ \x*0.3+2,\x*0.4);
      \foreach \x in {0,1,3}
      \draw [-, thick] (-4.0+ \x*0.3,0.5+\x*0.4) -- (-4.0+ \x*0.3+2,0.5+\x*0.4);
      \foreach \x in {0,1,3}
      \draw [-, thick] (-4.0+ \x*0.3+2,0.5+\x*0.4) -- (-4.0+ \x*0.3+2,\x*0.4);
      \foreach \x in {0,1,2}
      \node[draw,circle,inner sep=0.3pt,fill] at (-4.0+ 1.5+\x*0.15,1.1+\x*0.15) {};

      \node[below] at (-3.0,-0.1) {$G_1(\cdot)$};
      \node[below] at (-5.2,0.5) {{\scriptsize$i_1=1$}};
      \node[below] at (-5.0,1.2) {{\scriptsize$i_1=2$}};
      \node[below] at (-4.6,2.0) {{\scriptsize$i_1=n_1$}};

      \node[draw,circle,inner sep=0.3pt,fill] at (-0.5,2.0) {};

      \draw [-, thick] (0,0) -- (0,2);
      \draw [-, thick] (0,0) -- (2,0);
      \foreach \x in {1,3}
      \draw [-, thick] (\x*0.3,2+\x*0.5) -- (\x*0.3,1.5+\x*0.5);
      \foreach \x in {1,3}
      \draw [-, thick] (\x*0.3+1.7,\x*0.5) -- (\x*0.3+2,\x*0.5);
      \foreach \x in {0,1,3}
      \draw [-, thick] (\x*0.3,2+\x*0.5) -- (\x*0.3+2,2+\x*0.5);
      \foreach \x in {0,1,3}
      \draw [-, thick] (\x*0.3+2,2+\x*0.5) -- (\x*0.3+2,\x*0.5);
      \foreach \x in {0,1,2}
      \node[draw,circle,inner sep=0.3pt,fill] at (1.6+\x*0.15,2.73+\x*0.25) {};

      \node[below] at (1.0,-0.1) {$G_2(\cdot)$};

      \draw [-, thick] (6.7+ 0,0) -- (6.7+ 0,2);
      \draw [-, thick] (6.7+ 0,0) -- (6.7+ 2,0);
      \foreach \x in {1,3}
      \draw [-, thick] (6.7+ \x*0.3,2+\x*0.5) -- (6.7+ \x*0.3,1.5+\x*0.5);
      \foreach \x in {1,3}
      \draw [-, thick] (6.7+ \x*0.3+1.7,\x*0.5) -- (6.7+ \x*0.3+2,\x*0.5);
      \foreach \x in {0,1,3}
      \draw [-, thick] (6.7+ \x*0.3,2+\x*0.5) -- (6.7+ \x*0.3+2,2+\x*0.5);
      \foreach \x in {0,1,3}
      \draw [-, thick] (6.7+ \x*0.3+2,2+\x*0.5) -- (6.7+ \x*0.3+2,\x*0.5);
      \foreach \x in {0,1,2}
      \node[draw,circle,inner sep=0.3pt,fill] at (6.5+ 1.6+\x*0.15,2.73+\x*0.25) {};

      \node[below] at (7.8,-0.1) {$G_{\mu}(\cdot)$};
     \node[below] at (5.3,1.3) {{\scriptsize$i_\mu=1$}};
      \node[below] at (5.5,2.7) {{\scriptsize$i_\mu=2$}};
      \node[below] at (5.9,3.5) {{\scriptsize$i_\mu=n_\mu$}};
      
      \foreach \x in {0,1,2}
      \node[draw,circle,inner sep=0.3pt,fill] at (3.5+\x*0.2,2.0) {};

      \draw [-, thick] (11.0+ 0,0) -- (11.0+ 0,2);
      \draw [-, thick] (11.0+ 0,0) -- (11.0+ 2,0);
      \foreach \x in {1,3}
      \draw [-, thick] (11.0+ \x*0.3,2+\x*0.5) -- (11.0+ \x*0.3,1.5+\x*0.5);
      \foreach \x in {1,3}
      \draw [-, thick] (11.0+ \x*0.3+1.7,\x*0.5) -- (11.0+ \x*0.3+2,\x*0.5);
      \foreach \x in {0,1,3}
      \draw [-, thick] (11.0+ \x*0.3,2+\x*0.5) -- (11.0+ \x*0.3+2,2+\x*0.5);
      \foreach \x in {0,1,3}
      \draw [-, thick] (11.0+ \x*0.3+2,2+\x*0.5) -- (11.0+ \x*0.3+2,\x*0.5);
      \foreach \x in {0,1,2}
      \node[draw,circle,inner sep=0.3pt,fill] at (11.0+ 1.6+\x*0.15,2.73+\x*0.25) {};

      \node[below] at (12.0,-0.1) {$G_{d-1}(\cdot)$};
      \foreach \x in {0,1,2}
      \node[draw,circle,inner sep=0.3pt,fill] at (10.2+\x*0.2,2.0) {};
      
      \node[draw,circle,inner sep=0.3pt,fill] at (14.5,2.0) {};
      
      \draw [-, thick] (15.0+ 0,0) -- (15.0+ 0, 2);
      \draw [-, thick] (15.0+ 0,0) -- (15.0+ 0.5, 0);
      \foreach \x in {1,3}
      \draw [-, thick] (15.0+ \x*0.3, 2+\x*0.5) -- (15.0+ \x*0.3,  1.5+\x*0.5);
      \foreach \x in {1,3}
      \draw [-, thick] (15.0+ \x*0.3+0.2, \x*0.5) -- (15.0+ \x*0.3+0.5,  \x*0.5);
      \foreach \x in {0,1,3}
      \draw [-, thick] (15.0+ \x*0.3,2+\x*0.5) -- (15.0+ \x*0.3+0.5,  2+\x*0.5);
      \foreach \x in {0,1,3}
      \draw [-, thick] (15.0+ \x*0.3+0.5,2+\x*0.5) -- (15.0+ \x*0.3+0.5,  \x*0.5);
      \foreach \x in {0,1,2}
      \node[draw,circle,inner sep=0.3pt,fill] at (14.3+ 1.6+\x*0.15,2.73+\x*0.25) {};

      \node[below] at (15.3,-0.1) {$G_{d}(\cdot)$};
      
    \end{tikzpicture}
  \end{center}
  \caption{The TT representation of a tensor in $TT(r_1,\ldots,r_{d-1})$ with $G_\mu(i_\mu)\in\mathbb{R}^{r_{\mu-1}\times r_\mu}$.\label{mps}}
\end{figure}
\begin{definition}[TT tensor format]\label{TT tensor format}
  Let $r_0,\ldots,r_d\in\mathbb{N}$ and $r_0=r_d=1$. A tensor $A\in\mathbb{R}^\Index$ of the form or representation
  \begin{equation}\label{MPSform}
    A_{i_1,\ldots,i_d} = G_1(i_1)\cdots G_d(i_d),\qquad G_{\mu}(i_\mu)\in\mathbb{R}^{r_{\mu-1}\times r_{\mu}}
  \end{equation}
  for all $i\in\Index$ and $G_\mu:\Index_\mu\to\mathbb{R}^{r_{\mu-1}\times r_\mu}$ is said to 
  be of MPS (matrix product states) format or TT (tensor train) format or hierarchical format, cf. Figure \ref{mps}. 
  We define the set of tensors in TT format by
  \[TT(r_1,\ldots,r_{d-1}) := \{A\in\mathbb{R}^{\Index}\mid A \,\text{is of the form}\, (\ref{MPSform})\}.\]
  The parameters $r_\mu$ are called representation ranks and combined to the rank vector 
  {\boldmath$r$}$:=(r_1,\ldots,r_{d-1})$. For the matrix blocks $(G_\mu)_{\mu=1}^{d}$
  we use the short notation $G$. $G$ is called a representation system of $A$, and if we want to indicate
  that $A$ is represented by $G$ we write $A^G$.
\end{definition}
The minimal ranks $r_\mu$ for the representation of a tensor $A$ in TT format are the ranks 
of certain matricizations of $A$ \cite{Gr10,OsTy10}.

The number of parameters in the TT representation is 
\[ \sum_{\mu=1}^{d}r_{\mu-1}r_{\mu}n_\mu \sim {\cal O}(dr^2n),
\qquad r:=\max_{\mu\in D}\;r_\mu,
\quad n:=\max_{\mu\in D}\;n_\mu.\]
It could thus in principle be possible to reconstruct the tensor 
from a number of samples that is in ${\cal O}(\log N)$ for a tensor having $N=\prod_{i=1}^{d}n_i$ entries,
cf. Section \ref{sec:numrec}. 

\subsection{Statement of the Main Approximation Problem}

The full approximation problem can be stated as follows. For $S \subset \Index$ let
\[ \|X\|_{F} := \sqrt{\sum_{i\in \Index}X_i^2},\qquad (X|_S)_i := \begin{cases}
X_i & \mbox{ if } i \in S  \\
0 & \mbox{ otherwise } \end{cases}, \qquad \|X\|_{S}:=\|X|_S\|_F. \]

\begin{problem}[Main problem]\label{mainproblem}
  Given a tensor $M\in\mathbb{R}^{\Index}$ known only at points $P\subset\Index$, and given representation 
  ranks $r_1,\ldots,r_{d-1}$, find a representation (\ref{MPSform}) with representation system $G$ such that 
  $A=A^G$ fulfils
  \[A = \mathop{\rm argmin}_{\tilde{A}\in TT(r_1,\ldots,r_{d-1})} \|M-\tilde{A}\|_{P}.\]
\end{problem}

A related approach for tensor completion is presented in \cite{dSiHe13} where the authors use a steepest 
descent iteration on the tensor manifold. Our approach is an alternating least squares minimization and
an overrelaxation based on ideas from LMaFit for matrix completion \cite{WeYiZha10}. 
A short comparison is given in Section \ref{subsection:kl}.

\subsection{First Order Optimality Conditions and ALS}

For a representation system $(G_{\mu})_{\mu=1}^{d}$ such that $A=A^G$ one can write the 
main problem in the form
\[ G = \mathop{\rm argmin}_{\tilde{G}} \|M-A^{\tilde{G}}\|_{P}.\]
The direct first order optimality conditions for the matrix blocks $G_\mu$ are
\[G_\mu = \mathop{\rm argmin}_{\tilde{G}_\mu} \|M-A^{\tilde{G}}\|_{P},\quad
\tilde{G}_\nu := G_\nu \text{ for } \nu\ne\mu,\]
i.e. each matrix block $G_\mu$ is optimal when all other blocks are fixed. Starting
from some approximation $G$, the alternating least squares approach from \cite{HoRoSchn12} 
consists of an alternating best fit for each of the blocks $G_\mu$ in the order 
$\mu=1,\ldots,d$. It should be noted that the order can as well be chosen as
$\mu=d,\ldots,1$ or any other permutation. However, for practical purposes, the 
most straightforward choice seems to be either one of the aforementioned orderings, cf. Algorithm \ref{als}. 

\begin{algorithm}[htb]
  \begin{algorithmic}
    \REQUIRE Initial guess $A^G$
    \WHILE{stopping condition not fulfilled}
    \FOR{$\mu=1,\ldots,d$} 
    \STATE Determine $G_\mu := \mathop{\rm argmin}_{G_\mu}\|M-A^{G}\|_P $
    \ENDFOR
    \ENDWHILE
  \end{algorithmic}
  \caption{Alternating Least Squares (ALS) algorithm\label{als}}
\end{algorithm}

\begin{remark}[Slice-wise optimization]\label{swo}
  The minimizer $G_\mu$ in each step of Algorithm \ref{als} can be found slice-wise, 
  since each slice yields an independent least squares problem:
  \[
  G_{\mu}(j_{\mu}) := argmin_{G_\mu(j_{\mu})}\|M_{i_{\mu} = j_{\mu}}-A^{G}_{i_{\mu} = j_{\mu}} \|_{\{ p \in P \mid p_{\mu} = j_{\mu} \}}\]
\end{remark}

\subsection{Alternative Optimality Conditions and ADF}

An alternative formulation of our main problem is based on LMaFit ideas \cite{WeYiZha10}
and given by introducing an additional tensor $Z\in\mathbb{R}^\Index$ so that $G$ can be found via solving
\[ \mbox{minimize } f(G,Z) := \|Z-A^G\|_F 
\quad s.t.\quad Z|_{P} = M|_{P}, \quad A^G \in TT(r_1,\ldots,r_{d-1}).\]
The latter function $f$ yields first order optimality conditions
\[
Z|_{\Index \setminus P} = A^G|_{\Index \setminus P} \qquad \mbox{and}\quad 
G_\mu = \argmin_{\tilde{G}_\mu}\|Z-A^{\tilde{G}}\|_F,\quad
\tilde{G}_\nu := G_\nu \text{ for } \nu\ne\mu.\]
Solving this nonlinear system of equations simultaneously for $G_1,\ldots,G_d,Z$ is not trivial. 
In a hard or soft thresholding iteration, one would have to find a best approximation $A^G$ to a given tensor 
$Z$, and in the matrix case $d=2$ this is expensive but possible. For tensors in $d>2$ such a best approximation 
is not available. A common technique for finding a quasi-optimal approximation is an alternating optimization approach, cycling through
the unknowns $G_\mu$ (as above in ALS). But since our final goal is not the approximation of $Z$ but the minimization of $f$, 
it makes sense to directly solve the nonlinear system by an alternating fit.
We approach this nonlinear system by a nonlinear block Gauss-Seidel iteration where the blocks of
unknowns are $G_1,\ldots,G_d,Z$:
\begin{algorithmic}
  \REQUIRE Initial guess $A^G$
  \FOR{i=1,\ldots}
  \STATE For all $i\in\Index\setminus P$ set  $Z_i:=A^{G}_i$ and for all $i\in P$ set $Z_i:=M_i$
  \STATE For all $\mu\in D$ minimize $\|Z-A^{G}\|_F$ with respect to $G_\mu$ 
  \ENDFOR
\end{algorithmic}
Finally, we use (partial) successive overrelaxation in order to speed up the convergence.
We call the resulting algorithm `alternating directions fitting' (ADF), cf. Algorithm \ref{adf} 
(where the overrelaxation parameter still has to be specified).
\begin{algorithm}[htb]
  \begin{algorithmic}
    \REQUIRE Initial guess $A^G$, overrelaxation parameter $\alpha\ge 1$
    \WHILE{stopping condition not fulfilled}
    \STATE For all $i\in\Index\setminus P$ set  $Z_i:=A^{G}_i$ and for all $i\in P$ set $Z_i:=M_i$
    \FOR{$\mu=1,\ldots,d$} 
    \STATE Determine $G_\mu^+ := \argmin_{G_\mu}\|Z-A^{G}\|_F$ and set $G_\mu := G_\mu + \alpha(G_\mu^+-G_\mu)$
    \ENDFOR
    \ENDWHILE
  \end{algorithmic}
  \caption{Alternating Directions Fitting (ADF) algorithm\label{adf}}
\end{algorithm}

\subsection{Organization of the Article}

In Section \ref{sec:tensorcalculus}, we introduce the necessary tools for the analysis and algorithmic
treatment of the tensor approximation problem. 
Section \ref{sec:adfalgorithm} presents the ALS and ADF algorithm in detail and analyses the 
computational and storage complexity 
of one iterative step. Several practical issues like adaptive choice of the ranks, improved performance, 
and stopping criteria are developed. Finally we greatly simplify the determination of the overrelaxation 
parameter $\alpha$.
In the numerical examples in Section \ref{sec:numexp}, we apply the algorithms to three types of examples:
a) smooth function related tensors,
b) functionals of parametric PDE solutions, and
c) random low rank tensors with and without noise.
We conclude our findings in Section \ref{sec:conclusions}.

\section{Optimization in the TT-Format}\label{sec:tensorcalculus}
In this section we introduce the neccessary tools to work with matrix blocks in order to derive and formulate 
the core step of the ALS and ADF algorithm (Theorem \ref{corestep}).

\subsection{Matrix Blocks}

First we introduce matrix blocks, which are a useful tool both for tensor calculus and 
arithmetic in TT representation. 

\begin{definition}[Matrix block]
Let $k_1, k_2, n \in \mathbb{N}$. We define a matrix block $H \in (\mathbb{R}^{k_1 \times k_2})^n$ as a vector of matrices $H(1),\ldots,H(n) \in \mathbb{R}^{k_1 \times k_2}$. We call $k_1 \times k_2$ the dimension and $n$ the length of $H$.
\end{definition}
\begin{remark}
  a) In \cite{HoRoSchn12} a matrix block $H$ is called a component function $H(\cdot)$. 
  We use the name matrix block to point out that $H$ has the structure of an array of matrices.
  b) For fixed $k_1, k_2\in\mathbb{N}$ the set of matrix blocks $H \in (\mathbb{R}^{k_1 \times k_2})^n$ 
  forms an $\mathbb{R}$-vectorspace as well as a left-module over the non-abelian matrix ring 
  $\mathbb{R}^{k_1 \times k_1}$ and a right-module over $\mathbb{R}^{k_2 \times k_2}$.
\end{remark}

Matrix blocks can be combined via the Kronecker product to form higher dimensional tensors as they appear
in the definition of the TT representation $A^G$.
\begin{definition}[(Kronecker) product between matrix blocks]\label{kron}
We define the (Kronecker) product $\otimes$ for matrix blocks $H_1, H_2$ of dimensions $k_1 \times k_m, k_m \times k_2$ and lengths $n_1, n_2$ as
\[(H_1 \otimes H_2)((i,j)) := H_1(i) H_2(j) \]
where $(H_1 \otimes H_2)$ is a matrix block of dimension $k_1 \times k_2$ and length $n_1 n_2$.
\end{definition}
The definition is consistent with the conventional Kronecker product such that associativity is given. 

In order to simplify the notation we use the following convention:
\begin{itemize}
\item We treat the product of a matrix and a matrix block as if the matrix was a block of length $1$ and skip the 
  $\otimes$. It is referred to as pointwise multiplication. 
\item We write  $(H_1 \otimes \ldots \otimes H_n)(i_1 \ldots i_n)$  instead of 
  $(H_1 \otimes \ldots \otimes H_n)((i_1 \ldots i_n))$.
\item The empty Kronecker product is defined to be $I$ (identity matrix of suitable size).
\end{itemize}

\begin{remark}[Generating $A^G$]\label{gagbt}
Using the Kronecker product, one can express $A^G$ by
\[ A^G_{(i_1,\ldots,i_d)} = (G_1 \otimes \ldots \otimes G_d) (i_1,\ldots,i_d),\quad
A^G = G_1 \otimes \ldots \otimes G_d.\]
\end{remark}

In order to apply standard matrix tools, we have to switch between matrix blocks, matrices, and tensors. The
necessary foldings and unfoldings are introduced in the following.

\begin{definition}[Left and right unfolding, transpose]\label{hb}

Let $H \in (\mathbb{R}^{k_{1} \times k_{2}})^n$ be a matrix block. We define the left unfolding $\lhb{H}$ as
\begin{equation}\label{lefthandblock}
  \lhb{H} := \begin{bmatrix}
    H(1) \\
    H(2) \\
    \vdots \\
    H(n)
  \end{bmatrix} \in \mathbb{R}^{n k_{1} \times k_{2}},
  \qquad
  \mbox{\begin{minipage}[c]{4cm}
      \begin{tikzpicture}[label distance=-1mm,scale = 0.3]
        \draw [-, thick] (0,0) -- (0,2);
        \draw [-, thick] (0,0) -- (2,0);
        \foreach \x in {1,3}
	\draw [-, thick] (\x*0.3,2+\x*0.5) -- (\x*0.3,1.5+\x*0.5);
        \foreach \x in {1,3}
	\draw [-, thick] (\x*0.3+1.7,\x*0.5) -- (\x*0.3+2,\x*0.5);
        \foreach \x in {0,1,3}
	\draw [-, thick] (\x*0.3,2+\x*0.5) -- (\x*0.3+2,2+\x*0.5);
        \foreach \x in {0,1,3}
	\draw [-, thick] (\x*0.3+2,2+\x*0.5) -- (\x*0.3+2,\x*0.5);
        \foreach \x in {0,1,2}
	\node[draw,circle,inner sep=0.3pt,fill] at (1.6+\x*0.15,2.73+\x*0.25) {};
	
        \draw[->, thick] (4,1.75) -- node[below] {$\lhb{\cdot}$}  (6,1.75);
        
        \draw [-, thick] (7,3.75+1.75) -- (7,-0.5+1.75);
        \draw [-, thick] (9,3.75+1.75) -- (9,-0.5+1.75);
        \draw [-, thick] (9,3.75+1.75) -- (7,3.75+1.75);
        \draw [-, dashed] (9,1.75+1.75) -- (7,1.75+1.75);
        \draw [-, dashed] (9,-0.25+1.75) -- (7,-0.25+1.75);
        \draw [-, dashed] (9,-1.75+1.75) -- (7,-1.75+1.75);
        \draw [-, thick] (9,-3.75+1.75) -- (7,-3.75+1.75);
        \draw [-, thick] (9,-3.75+1.75) -- (9,-1.5+1.75);
        \draw [-, thick] (7,-3.75+1.75) -- (7,-1.5+1.75);
        \foreach \x in {1,2,3}
	\node[draw,circle,inner sep=0.3pt,fill] at (8,1.5/4*\x) {};
      \end{tikzpicture}
    \end{minipage}
  }
\end{equation}
and the right unfolding $\rhb{H}$ as
\begin{equation}\label{righthandblock}
  \rhb{H} := \begin{bmatrix}
    H(1) & H(2) & \ldots & H(n) 
  \end{bmatrix} \in \mathbb{R}^{k_{1} \times n k_{2}}.
  \qquad
  \mbox{\begin{minipage}[c]{4.5cm}
      \begin{tikzpicture}[label distance=-1mm,scale = 0.3]
        \draw [-, thick] (0,0) -- (0,2);
        \draw [-, thick] (0,0) -- (2,0);
        \foreach \x in {1,3}
	\draw [-, thick] (\x*0.3,2+\x*0.5) -- (\x*0.3,1.5+\x*0.5);
        \foreach \x in {1,3}
	\draw [-, thick] (\x*0.3+1.7,\x*0.5) -- (\x*0.3+2,\x*0.5);
        \foreach \x in {0,1,3}
	\draw [-, thick] (\x*0.3,2+\x*0.5) -- (\x*0.3+2,2+\x*0.5);
        \foreach \x in {0,1,3}
	\draw [-, thick] (\x*0.3+2,2+\x*0.5) -- (\x*0.3+2,\x*0.5);
        \foreach \x in {0,1,2}
	\node[draw,circle,inner sep=0.3pt,fill] at (1.6+\x*0.15,2.73+\x*0.25) {};
	
        \draw[->, thick] (4,1.75) -- node[below] {$\rhb{\cdot}$}  (6,1.75);
        
        \draw [-, thick] (-3.75-1.75+12.5,7-6.25) -- (+0.5-1.75+12.5,7-6.25);
        \draw [-, thick] (-3.75-1.75+12.5,9-6.25) -- (+0.5-1.75+12.5,9-6.25);
        \draw [-, thick] (-3.75-1.75+12.5,9-6.25) -- (-3.75-1.75+12.5,7-6.25);
        \draw [-, dashed] (-1.75-1.75+12.5,9-6.25) -- (-1.75-1.75+12.5,7-6.25);
        \draw [-, dashed] (+0.25-1.75+12.5,9-6.25) -- (+0.25-1.75+12.5,7-6.25);
        \draw [-, dashed] (+1.75-1.75+12.5,9-6.25) -- (+1.75-1.75+12.5,7-6.25);
        \draw [-, thick] (+3.75-1.75+12.5,9-6.25) -- (+3.75-1.75+12.5,7-6.25);
        \draw [-, thick] (+3.75-1.75+12.5,9-6.25) -- (+1.5-1.75+12.5,9-6.25);
        \draw [-, thick] (+3.75-1.75+12.5,7-6.25) -- (+1.5-1.75+12.5,7-6.25);
        \foreach \x in {1,2,3}
	\node[draw,circle,inner sep=0.3pt,fill] at (1.5/4*\x+11,8-6.25) {};
      \end{tikzpicture}
    \end{minipage}
  }
\end{equation}
The transpose $H^T$ of a matrix block is a matrix block defined by $H^T(i) := H(i)^T$.
\end{definition}

In \cite{RoUsch12} the left and right unfoldings $\lhb{H}$ and $\rhb{H}$ are denoted by $H^L$ and $H^R$. 
We adjust the notation to our requirements and in order to illustrate that they are mappings.

\begin{remark}[Conjugacy of block operations]\label{cobo}
The left and right unfolding are conjugate operations by means of
\[ \lhb{H}^T = \rhb{H^T} \]
\end{remark}
\begin{definition}[Left and right $s$-unfolding of a representation]\label{larpoar}
For a representation $G$ as in Definition \ref{TT tensor format}, we denote the left $s$-unfolding by 
\[ G^{<s} := \lhb{G_1 \otimes \ldots \otimes G_{s-1}} \in \mathbb{R}^{n^{<s} \times r_{s-1}}, \quad n^{<s} = \prod_{\mu < s} n_{\mu} \]
and likewise the right $s$-unfolding by 
\[ G^{>s} := \rhb{G_{s+1} \otimes \ldots \otimes G_d} \in \mathbb{R}^{r_s \times n^{>s}}, \quad n^{>s} = \prod_{\mu > s} n_{\mu}. \]
We shortly call these just unfoldings and skip the index $s$.
\end{definition}

\begin{figure}
  \begin{center}
    \begin{tikzpicture}[label distance=-1mm,scale = 0.35]
      
      \draw [-, thick] (0,0) -- (0,2);
      \draw [-, thick] (0,0) -- (2,0);
      \foreach \x in {1,3}
      \draw [-, thick] (\x*0.3,2+\x*0.5) -- (\x*0.3,1.5+\x*0.5);
      \foreach \x in {1,3}
      \draw [-, thick] (\x*0.3+1.7,\x*0.5) -- (\x*0.3+2,\x*0.5);
      \foreach \x in {0,1,3}
      \draw [-, thick] (\x*0.3,2+\x*0.5) -- (\x*0.3+2,2+\x*0.5);
      \foreach \x in {0,1,3}
      \draw [-, thick] (\x*0.3+2,2+\x*0.5) -- (\x*0.3+2,\x*0.5);
      \foreach \x in {0,1,2}
      \node[draw,circle,inner sep=0.3pt,fill] at (1.6+\x*0.15,2.73+\x*0.25) {};
      
      \draw [-, thick] (7-11,3.75+1.75) -- (7-11,-0.5+1.75);
      \draw [-, thick] (9-11,3.75+1.75) -- (9-11,-0.5+1.75);
      \draw [-, thick] (9-11,3.75+1.75) -- (7-11,3.75+1.75);
      \draw [-, dashed] (9-11,1.75+1.75) -- (7-11,1.75+1.75);
      \draw [-, dashed] (9-11,-0.25+1.75) -- (7-11,-0.25+1.75);
      \draw [-, dashed] (9-11,-1.75+1.75) -- (7-11,-1.75+1.75);
      \draw [-, thick] (9-11,-3.75+1.75) -- (7-11,-3.75+1.75);
      \draw [-, thick] (9-11,-3.75+1.75) -- (9-11,-1.5+1.75);
      \draw [-, thick] (7-11,-3.75+1.75) -- (7-11,-1.5+1.75);
      \foreach \x  in {1,2,3,5,6,7,12,13,14}
      \draw [-, dashed] (9-11,3.75+1.75-\x*0.5) -- (7-11,3.75+1.75-\x*0.5);
      \foreach \x in {1,2,3}
      \node[draw,circle,inner sep=0.3pt,fill] at (8-11,1.5/4*\x) {};
      
      \draw [-, thick] (-3.75-1.75+10.5,7-6.25) -- (+0.5-1.75+10.5,7-6.25);
      \draw [-, thick] (-3.75-1.75+10.5,9-6.25) -- (+0.5-1.75+10.5,9-6.25);
      \draw [-, thick] (-3.75-1.75+10.5,9-6.25) -- (-3.75-1.75+10.5,7-6.25);
      \draw [-, dashed] (-1.75-1.75+10.5,9-6.25) -- (-1.75-1.75+10.5,7-6.25);
      \draw [-, dashed] (+0.25-1.75+10.5,9-6.25) -- (+0.25-1.75+10.5,7-6.25);
      \draw [-, dashed] (+1.75-1.75+10.5,9-6.25) -- (+1.75-1.75+10.5,7-6.25);
      \draw [-, thick] (+3.75-1.75+10.5,9-6.25) -- (+3.75-1.75+10.5,7-6.25);
      \draw [-, thick] (+3.75-1.75+10.5,9-6.25) -- (+1.5-1.75+10.5,9-6.25);
      \draw [-, thick] (+3.75-1.75+10.5,7-6.25) -- (+1.5-1.75+10.5,7-6.25);
      \foreach \x in {1,2,3}
      \node[draw,circle,inner sep=0.3pt,fill] at (1.5/4*\x+9,8-6.25) {};
      \foreach \x  in {1,2,3,5,6,7,12,13,14}
      \draw [-, dashed] (-3.75-1.75+10.5+\x*0.5,7-6.25) -- (-3.75-1.75+10.5+\x*0.5,9-6.25);
      
      \node at (-6.25,1.75) {$=$};
      
      \begin{scope}[shift={(-1,-1)}]
        \draw [-, thick] (-15.5+0.3,5.5+0.5) -- (-8+0.3,5.5+0.5);
        \draw [-, thick] (-15.5+0.3,5.5+0.5) -- (-15.5+0.3,5.5);
        \draw [-, thick] (-8+0.3,5.5+0.5) -- (-8+0.3,-2+0.5);
        \draw [-, thick] (-8,-2+0.5) -- (-8+0.3,-2+0.5);
        \begin{scope}[shift={(0.6,1)}]
          \draw [-, thick] (-15.5+0.3,5.5+0.5) -- (-8+0.3,5.5+0.5);
          \draw [-, thick] (-15.5+0.3,5.5+0.5) -- (-15.5+0.3,5.5);
          \draw [-, thick] (-8+0.3,5.5+0.5) -- (-8+0.3,-2+0.5);
          \draw [-, thick] (-8,-2+0.5) -- (-8+0.3,-2+0.5);
        \end{scope}
        \foreach \x in {0,1,2}
	\node[draw,circle,inner sep=0.3pt,fill] at (3.75-15.5+\x*0.15+0.5,6.25+\x*0.25) {};
        
        \draw [-, thick] (-8,3.75+1.75) -- (-8,-0.5+1.75);
        \draw [-, thick] (-3.75-1.75+10.5-20.5,3.75+1.75) -- (-3.75-1.75+10.5-20.5,-0.5+1.75);
        \draw [-, thick] (-8,3.75+1.75) -- (-10,3.75+1.75);
        \draw [-, thick] (-8,-3.75+1.75) -- (-10,-3.75+1.75);
        \draw [-, thick] (-8,-3.75+1.75) -- (-8,-1.5+1.75);
        \draw [-, thick] (-3.75-1.75+10.5-20.5,-3.75+1.75) -- (-3.75-1.75+10.5-20.5,-1.5+1.75);
        \foreach \x  in {1,2,3,4,5,6,7,8,11,12,13,14}
	\draw [-, dashed] (-10,3.75+1.75-\x*0.5) -- (-8,3.75+1.75-\x*0.5);
	\foreach \x  in {1,2,3,4,5,6,7,8,11,12,13,14}
	\draw [-, dashed] (-11.5,3.75+1.75-\x*0.5) -- (-3.75-1.75+10.5-20.5,3.75+1.75-\x*0.5);
        \foreach \x in {1,2,3}
	\node[draw,circle,inner sep=0.3pt,fill] at (-9,1.5/4*\x) {};
	\foreach \x in {1,2,3}
	\node[draw,circle,inner sep=0.3pt,fill] at (-13.5,1.5/4*\x) {};
	
        \draw [-, thick] (-3.75-1.75+10.5-20.5,-3.75+1.75) -- (+0.5-1.75+10.5-20.5,-3.75+1.75);
        \draw [-, thick] (-3.75-1.75+10.5-20.5,3.75+1.75) -- (+0.5-1.75+10.5-20.5,3.75+1.75);
        \draw [-, thick] (-3.75-1.75+10.5-20.5,0) -- (-3.75-1.75+10.5-20.5,-3.75+1.75);
        \draw [-, thick] (+3.75-1.75+10.5-20.5,0) -- (+3.75-1.75+10.5-20.5,-3.75+1.75);
        \draw [-, thick] (+3.75-1.75+10.5-20.5,-3.75+1.75) -- (+1.5-1.75+10.5-20.5,-3.75+1.75);
        \draw [-, thick] (+3.75-1.75+10.5-20.5,+3.75+1.75) -- (+1.5-1.75+10.5-20.5,+3.75+1.75);
        \foreach \x in {1,2,3}
	\node[draw,circle,inner sep=0.3pt,fill] at (1.5/4*\x+9-20.5,-1) {};
	\foreach \x in {1,2,3}
	\node[draw,circle,inner sep=0.3pt,fill] at (1.5/4*\x+9-20.5,3.5) {};
        \foreach \x  in {1,2,3,4,5,6,7,8,11,12,13,14}
	\draw [-, dashed] (-3.75-1.75+10.5+\x*0.5-20.5,0) -- (-3.75-1.75+10.5+\x*0.5-20.5,-2);
        \foreach \x  in {1,2,3,4,5,6,7,8,11,12,13,14}
	\draw [-, dashed] (-3.75-1.75+10.5+\x*0.5-20.5,1.5) -- (-3.75-1.75+10.5+\x*0.5-20.5,3.75+1.75);
      \end{scope}
    \end{tikzpicture}
  \end{center}
  \caption{The block matricization of $A^G$ is the product $A^G_{(s)} = G^{<s} \ G_s \ G^{>s}$ of the left
    unfolding times matrix block times right unfolding.\label{pmoag}}
\end{figure}
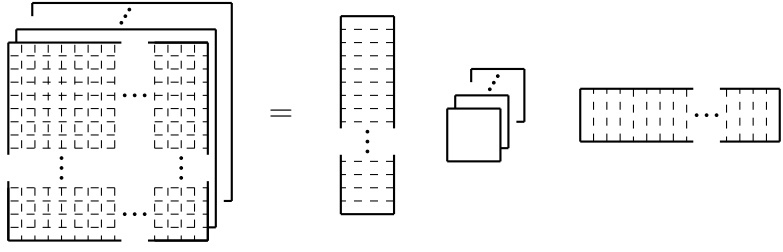
\begin{definition}[Block matricization]\label{pm}
Let $A \in \mathbb{R}^{\Index}$ be a d-dimensional tensor. A block matricization with respect to $s \in \{1,\ldots,d\}$, $A_{(s)}$, is defined as the matrix block of dimension $(n_1\ldots n_{s-1}) \times (n_{s+1}\ldots n_d)$ and length $n_s$, given by 
\[ (A_{(s)}(i_s))_{(i_1,\ldots,i_{s-1}),(i_{s+1},\ldots,i_d)} := A_{i_1,\ldots,i_d}, \quad \forall i_s\in\Index_s.\]
\end{definition}
In case that $A$ is a tensor in TT format with representation $A=A^G$, the block matricization is
simply (cf. Figure \ref{pmoag})
\[ A^G_{(s)} = G^{<s} \ G_s \ G^{>s}\]

\subsection{Scalar Product and Orthogonality}

The standard scalar product can be transfered to matrix blocks as follows.

\begin{definition}[(Scalar) product of matrix blocks]\label{sp}
Let $G$ and $H$ be matrix blocks of dimensions $k_1 \times k_m$,$k_m \times k_2$ and same length. Then we define 
their (scalar) product as
\[ \langle G,H\rangle := \sum_{\substack{i}} G(i) H(i) = \rhb{G} \lhb{H} \in \mathbb{R}^{k_1 \times k_2} .\]
For a matrix $J \in \mathbb{R}^{k_{m} \times k_{m}}$ we define
\[ \langle G,J,H\rangle := \langle G J,H\rangle = \langle G,J H\rangle. \]
Note that $\langle \cdot,\cdot \rangle$ is only a product with scalar output regarding its module properties. 
\end{definition}

\begin{definition}[$\mathbb{R}$-scalar product and matrix block norm]
Let $V := (\mathbb{R}^{k_1 \times k_2})^n$ be the $\mathbb{R}$-vector space of matrix blocks of dimension $k_1 \times k_2$ and length $n$. Then $\langle\cdot,\cdot\rangle$ defines a scalar product $\langle \cdot,\cdot \rangle_\mathbb{R}$ on $V$ via
\[ \langle G,H \rangle_\mathbb{R} := {\rm trace} \langle G, H^T \rangle = {\rm trace} \langle G^T, H \rangle, 
\qquad G,H\in V.\]
The corresponding norm $||\cdot||$ on $V$ is defined as $\|G\| := \sqrt{\langle G, G \rangle_{\mathbb{R}}}$.
\end{definition}

\begin{remark}[Properties of the matrix block norm and scalar product]
For a matrix block $G$, tensor $A$ and index $s\in D$, it holds 
\[ \|G\| = \sqrt{\sum_{i} ||G(i)||^2_F}, \qquad \|A\|_F = \|A_{(s)}\|. \]
The $\mathbb{R}$ scalar product hence coincides with the standard scalar product between the according vectorizations of the matrix blocks.
\end{remark}

We introduce the concept of orthogonality (cf. \cite{HoRoSchn12}) for matrix blocks, by which we can 
simplify the minimization problem.

\begin{definition}[Orthogonality of matrix blocks]\label{ortho}
For a matrix block $H$, we call $H$ \\
left orthogonal if the columns of $\lhb{H}$ are orthogonal (this being $\langle H^T,H\rangle = I$), and
right orthogonal if the rows of $\rhb{H}$ are orthogonal (this being $\langle H,H^T\rangle = I$). \\
Let $Q$ be a matrix block of same dimensions as $H$. We then define the (non-unique) operation $orth^{\ell}$ such that for $Q = orth^{\ell}(H)$, the pair $(\lhb{Q},R)$ is a QR-decomposition of $\lhb{H}$. Then $Q$ is left orthogonal and $Q R = H$. \\
Likewise $orth^r$ is such that for $Q = orth^r(H)$, the pair $(L,\rhb{Q})$ is an LQ-decomposition of $\rhb{H}$. 
Then $Q$ is right orthogonal and $L Q = H$.
\end{definition}
In Corollary \ref{ookp}, we demonstrate how orthogonality, the scalar product and the Kronecker product 
are used to show the feasibility (Theorem \ref{lucs}) of the ADF core step (Theorem \ref{corestep}).
\begin{lemma}[Scalar products of Kronecker products]\label{spoakp}
Let $G_1, G_2$ and $H_1, H_2$ be matrix blocks of appropriate dimensions and lengths. Then
\[ \langle (G_1 \otimes G_2)^T, H_1 \otimes H_2 \rangle = \langle G_2^T, \langle G_1^T, H_1 \rangle, H_2 \rangle, \]
respectively
\[ \langle G_1 \otimes G_2, (H_1 \otimes H_2)^T \rangle = \langle G_1, \langle G_2, H_2^T \rangle, H_1^T \rangle. \]
\end{lemma}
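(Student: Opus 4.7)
The plan is to verify both identities by a direct expansion that unwinds Definitions \ref{kron} and \ref{sp}, using the block-wise action of the transpose and the associativity of matrix multiplication to regroup the resulting double sum.

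For the first identity I would start by unpacking the left-hand side pointwise. By Definition \ref{kron}, $(H_1 \otimes H_2)(i,j) = H_1(i) H_2(j)$ and $(G_1 \otimes G_2)(i,j) = G_1(i) G_2(j)$. Taking the transpose block-wise (Definition \ref{hb}) reverses the matrix product, giving $(G_1 \otimes G_2)^T(i,j) = G_2(j)^T G_1(i)^T$. Plugging into Definition \ref{sp} then yields
\[
\langle (G_1 \otimes G_2)^T, H_1 \otimes H_2 \rangle
  = \sum_{i,j} G_2(j)^T\, G_1(i)^T\, H_1(i)\, H_2(j).
\]
The key step is to notice that the middle factor $\sum_i G_1(i)^T H_1(i)$ does not depend on $j$ and equals $\langle G_1^T, H_1 \rangle$ by Definition \ref{sp}. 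Factoring it out of the $j$-sum leaves $\sum_j G_2(j)^T \langle G_1^T, H_1\rangle H_2(j)$, which matches the three-argument scalar product $\langle G_2^T, \langle G_1^T, H_1\rangle, H_2\rangle$ exactly.

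The second identity is proved by the symmetric manipulation: expand $(H_1 \otimes H_2)^T(i,j) = H_2(j)^T H_1(i)^T$, rewrite the double sum as $\sum_{i,j} G_1(i) G_2(j) H_2(j)^T H_1(i)^T$, and this time pull the $j$-sum to the inside to obtain $\sum_j G_2(j) H_2(j)^T = \langle G_2, H_2^T\rangle$. The remaining outer $i$-sum is then $\langle G_1, \langle G_2, H_2^T\rangle, H_1^T\rangle$.

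I do not expect a genuine obstacle here: the lemma is essentially a bookkeeping statement expressing that a Kronecker product of matrix blocks behaves as two nested sums that can be evaluated in either order. The only point requiring care is dimensional compatibility at each stage — checking that the inner scalar product produces a matrix of the right dimensions to serve as the middle argument of the outer three-argument scalar product — and this is automatic once one accepts that the left-hand side is well-posed to begin with.
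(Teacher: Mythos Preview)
Your proof is correct and follows essentially the same approach as the paper: expand the Kronecker product and transpose via the definitions, write the scalar product as a double sum, and regroup to recognise the inner sum as $\langle G_1^T, H_1\rangle$ (respectively $\langle G_2, H_2^T\rangle$). The only difference is that the paper invokes symmetry to skip the second identity, whereas you spell it out.
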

\begin{proof}
Due to symmetry we consider only the first case. By definition and reordering of summation, we obtain
\begin{align*}
  & \langle (G_1 \otimes G_2)^T, H_1 \otimes H_2 \rangle \quad  = \sum_{i} ((G_1 \otimes G_2)(i))^T (H_1 \otimes H_2)(i) \\
  = & \sum_{i_1,i_2} G_2(i_2)^T G_1(i_1)^T (H_1(i_1) H_2(i_2)) \quad
  = \sum_{i_2} G_2(i_2)^T \sum_{i_1} (G_1(i_1)^T H_1(i_1)) H_2(i_2) \\
  = & \sum_{i_2} G_2(i_2)^T \langle G_1^T, H_1 \rangle H_2(i_2) \quad
  = \langle G_2^T, \langle G_1^T, H_1 \rangle, H_2 \rangle.
\end{align*}
\end{proof}

\begin{corollary}[Orthogonality of Kronecker products]\label{ookp}
If $G_1=H_1$ are left orthogonal in Lemma \ref{spoakp}, then
\[\langle (G_1 \otimes G_2)^T, H_1 \otimes H_2 \rangle = \langle G_2^T,H_2 \rangle.\]
If $G_2=H_2$ are right orthogonal in Lemma \ref{spoakp}, then
\[\langle G_1 \otimes G_2, (H_1 \otimes H_2)^T \rangle = \langle G_1,H_1^T \rangle.\]
\end{corollary}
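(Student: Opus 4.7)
The plan is to read this as a direct specialization of Lemma \ref{spoakp} combined with the definitions in \ref{ortho} and \ref{sp}. Both claims are symmetric, so I will describe the first in detail and note the parallel for the second.

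For the first identity, I would start from the conclusion of Lemma \ref{spoakp}, namely
\[ \langle (G_1 \otimes G_2)^T, H_1 \otimes H_2 \rangle = \langle G_2^T, \langle G_1^T, H_1 \rangle, H_2 \rangle. \]
The middle factor $\langle G_1^T, H_1 \rangle$ is exactly the quantity that Definition \ref{ortho} controls: since $G_1 = H_1$ is assumed left orthogonal, we have $\langle G_1^T, G_1 \rangle = I$. Substituting this into the three-argument scalar product gives $\langle G_2^T, I, H_2\rangle$, which by the very definition in \ref{sp} (multiplication by the identity matrix is absorbed into either slot) collapses to $\langle G_2^T, H_2\rangle$, as claimed.

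For the second identity I would apply the second (symmetric) form of Lemma \ref{spoakp} to obtain
\[ \langle G_1 \otimes G_2, (H_1 \otimes H_2)^T \rangle = \langle G_1, \langle G_2, H_2^T \rangle, H_1^T \rangle, \]
and then invoke the right-orthogonality hypothesis $G_2 = H_2$, which by Definition \ref{ortho} means $\langle G_2, G_2^T \rangle = I$. The inner factor becomes $I$, and again by Definition \ref{sp} the three-argument bracket reduces to $\langle G_1, H_1^T\rangle$.

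There is no genuine obstacle here; the proof is a one-line substitution in each case. The only thing worth being careful about is matching the slots correctly: left orthogonality produces a $\langle \cdot^T, \cdot \rangle = I$ identity, so it is precisely the form that appears in the first half of Lemma \ref{spoakp}, while right orthogonality produces a $\langle \cdot, \cdot^T \rangle = I$ identity matching the second half. This pairing is what makes each claim a clean corollary rather than requiring an additional transpose manipulation.
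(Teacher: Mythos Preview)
Your proposal is correct and matches the paper's intended approach exactly: the paper states this result as a direct corollary of Lemma \ref{spoakp} without giving a separate proof, precisely because it is the one-line substitution you describe (orthogonality makes the inner bracket equal to $I$, and Definition \ref{sp} then collapses the three-argument product).
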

\begin{remark}[Non-uniqueness of representations]\label{noninj}
In the TT-format, the representations are highly non-unique \cite{RoUsch12}. This degree of freedom can be an 
advantage: one can always assume that all matrix blocks $G_i$ are left orthogonal for $i<h$ and right
orthogonal for $i>h$. Then $G$ is called orthogonalized with respect to $h$, or in short $h$-orthogonal. 
This concept is also described in \cite{HoRoSchn12}, where $G_h$ is called core of $G$. 
It follows that $\left\|A^G\right\|_F = \left\|G_h\right\|$.
\end{remark}


\section{The ALS and ADF Algorithm}\label{sec:adfalgorithm}

We first approach Problem \ref{mainproblem} by the ALS Algorithm \ref{als}, for which we introduce the rank 
increasing strategy in detail in Algorithm \ref{alsri}.
We then derive the optimality conditions of this problem with respect to a single block, which is the basic step of 
the ADF Algorithm \ref{adf}.
We adapt the stopping criteria, previously given for the rank increasing ALS algorithm, and 
provide a useful heuristic for choosing the overrelaxation parameter $\alpha$ 
(Remark \ref{alpha} and Algorithm \ref{dir}). Finally, we greatly simplify the choice of $\alpha$.


\subsection{Rank Increasing Strategy and Alternating Least Squares}

In this section we assume that a target rank $r_{final}$ is given and that we are interested in a tensor 
completion scheme with equal ranks $r_1 = \cdots = r_{d-1} = r_{final}$ in the TT format. 
A successful strategy for finding good initial values for the optimization is to start with 
minimal ranks $r_1 = \ldots = r_{d-1} = 1$. Each time the algorithm fails to progress sufficiently
(cf. Remark \ref{bcls}), the ranks $r_\mu$ of $G$ are increased until the  
final target rank $r_{final}$ is reached. 

\begin{remark}[Initial values]\label{iv}\label{athr}
  We start our approximation scheme with equal ranks $r_1 = \ldots = r_{d-1} = 1$ and matrix blocks 
  \[ (G_s(i))_{1,1} := \frac{1}{\sqrt{n}}, \quad \forall s, i. \]
  $G$ is thereby uniform and each block is orthogonal. 
  The adaption of the representation $G$ to ranks $r + 1$ is done in a 
  straightforward way. The two matrix blocks $G_1, G_d$ are replaced by
  \[ G_1(i) \leftarrow \begin{bmatrix} G_1(i) & 1/\sqrt{n} \end{bmatrix}, \ G_d(i) \leftarrow \begin{bmatrix} G_d(i) \\ 1/\sqrt{n} \end{bmatrix} \qquad \forall i, \]
  while the other matrix blocks are replaced by
  \[ G_s(i) \leftarrow \begin{bmatrix} G_s(i) & 0 \\ 0 & 1/\sqrt{n} \end{bmatrix} \qquad \forall i. \] 
  This results in an initial guess which is the sum of the previous (lower) rank approximation plus
  a rank one term as above.
\end{remark}

\begin{remark}[Stopping criteria]\label{bcls}
  Our rank increasing scheme needs a robust stopping criterion for the least squares fixed rank 
  optimization. Here, we use the heuristic that whenever the improvements of one sweep are too small,
  we stop the fixed rank optimization and increase the rank parameter, where 'sweep' refers to one alternating cycle through all directions.
  Let $\mean{\gamma}_5$ denote the arithmetic mean of the last $5$ residual reduction 
  factors ($Res(G):=\|A^G-M\|_P$ after a sweep):
  \[ \gamma_i := \frac{Res(G^i)}{Res(G^{i-1})},\qquad  i={\tt iter}-4 ,\ldots, {\tt iter},\qquad
  \mean{\gamma}_5 := \frac{\gamma_{{\tt iter}-4} +\cdots +\gamma_{\tt iter}}{5}.\]
  Then we stop the fixed rank optimization if 
  \[|1-\mean{\gamma}_5| < \varepsilon_{stop}\] 
  where reasonable choices for $\varepsilon_{stop}$ vary between $10^{-2}$ and $10^{-5}$. 
 
\end{remark}

The final algorithm with our choice of starting values is given 
in Algorithm \ref{alsri}.
The orthogonalization of $G$ with respect to $s$ in the inner loop is not necessary 
but improves the stability and can be performed without significant increase in 
computational complexity. 

\begin{algorithm}[!htb]
  \caption{Rank increasing ALS algorithm\label{alsri}}
  \begin{algorithmic}
    \STATE Initialize the representation $G$ for $r = 1$ (Remark \ref{iv}); 
    \FOR{$r=1 \ldots r_{final}$}
    \FOR{${\tt iter} = 1, \ldots, {\tt iter}_{max}$}
	\FOR{$s = 1,\ldots,d$}
		\STATE orthogonalize $G$ with respect to $s$;
		\STATE update $G_s \leftarrow \argmin_{G_s} \|A^G - M\|_P$;

	 \ENDFOR
	 \IF{stopping criteria apply}     
	 \STATE stop the {\tt iter} loop;
	 \COMMENT {Remark \ref{bcls}}
	 \ENDIF
    \ENDFOR
	\STATE adapt representation to $r+1$; 
	\COMMENT{Remark \ref{athr}}
    \ENDFOR
  \end{algorithmic}
\end{algorithm}

\begin{lemma}[Computational complexity]\label{re:comp}
  The computational complexity for one sweep of the ALS algorithm for rank $r$ is in 
  \[\mathcal{O}(r^4 d \# P)\]
  Assuming that for each rank $r=1,\ldots,r_{final}$ we require ${\tt iter}_{max}$ many sweeps of
  ALS, we obtain a total complexity of
  \[\mathcal{O}({\tt iter}_{max} r^5 d \# P)\]
\end{lemma}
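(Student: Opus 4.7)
The plan is to bound one sweep by aggregating the cost of the slice-wise least squares problems from Remark \ref{swo}, and then to sum over directions and over the rank loop. The two ingredients are efficient assembly of each slice LS problem via precomputed left/right partial products, and a direct count of the solve cost.

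First I would fix $s$ and use Remark \ref{swo} to decouple the update of $G_s$ into $n_s$ independent least squares problems indexed by $j_s \in \Index_s$, each with $c(j_s)$ equations and $r_{s-1}r_s$ unknowns. Using Remark \ref{gagbt} and Definition \ref{larpoar}, for each sample $p \in P$ with $p_s = j_s$ one has $A^G_p = G^{<s}(p^{<s})\, G_s(j_s)\, G^{>s}(p^{>s})$, which is linear in $\mathrm{vec}(G_s(j_s))$ with row coefficient $G^{>s}(p^{>s})^T \otimes G^{<s}(p^{<s}) \in \mathbb{R}^{r_{s-1} r_s}$. Assuming the partial products are available at each sample, assembling the normal equations of this slice problem costs $O(c(j_s)\, r^4)$; the $r^2 \times r^2$ solve is $O(r^6)$ and is dominated because $c(j_s) \ge \csd r_s^2$. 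Summing $\sum_{j_s} c(j_s) = \#P$ yields the per-direction bound $O(r^4 \#P)$.

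Second, I would verify that maintaining $G^{<s}(p^{<s})$ and $G^{>s}(p^{>s})$ and the intermediate orthogonalizations only contribute lower order work. Sweeping in the natural order $s = 1,\ldots,d$, each left partial product is updated from the previous one by a single matrix-vector multiplication of cost $O(r^2)$ per sample, i.e.\ $O(r^2 \#P)$ per direction; the right products are obtained once by an initial backward pass of total cost $O(d r^2 \#P)$. The QR used to orthogonalize $G$ with respect to $s$ costs $O(n_s r^3)$, negligible against the LS cost. Summing over the $d$ directions gives the announced per-sweep bound $O(r^4 d \#P)$.

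For the total cost one adds the per-sweep cost over ranks $r = 1,\ldots,r_{final}$ with up to ${\tt iter}_{max}$ sweeps per rank:
\[ \sum_{r=1}^{r_{final}} {\tt iter}_{max}\cdot O(r^4 d \#P) \;=\; O\bigl({\tt iter}_{max}\, r_{final}^{\,5}\, d\, \#P\bigr), \]
using $\sum_{r=1}^{R} r^4 = O(R^5)$, which matches the claim with $r = r_{final}$. The main obstacle is not a single hard estimate but the careful bookkeeping showing that the partial products $G^{<s}(p^{<s})$, $G^{>s}(p^{>s})$ can be stored and updated cheaply across the sweep, so that LS assembly does not pick up an extra factor $d$ and does not dominate the solve.
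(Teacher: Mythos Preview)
Your proof is correct and follows essentially the same route as the paper: decouple each block update into slice-wise least squares problems of size $c(j_s)\times r^2$, sum $\sum_{j_s} c(j_s)=\#P$ to get $\mathcal{O}(r^4\#P)$ per direction, and note that orthogonalization and assembly of the partial products are lower order. Your treatment is in fact slightly more careful than the paper's in two places---you explicitly separate the $\mathcal{O}(r^6)$ solve from the $\mathcal{O}(c(j_s)r^4)$ assembly and invoke the slice-density bound to dominate the former, and you spell out the incremental left/right partial-product updates rather than citing a per-entry evaluation cost---but the argument is the same.
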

\begin{proof}
  The estimate for the total complexity obviously follows from the first one. 
  For one sweep we have to determine each of the blocks $G_s$ once by setting up and solving 
  a linear least squares problem. Naturally the least squares problem decouples into $n_s$
  independent linear least squares problems of size
  $\#{\{ p \ | \ p \in P, p_{s} = i_{s} \}} \times r^2$.
  Solving these for all $i_s=1,\ldots,n_s$ is possible in 
  ${\cal O}(\#P r^4)$, and summing this up for all directions $s=1,\ldots,d$ gives a 
  complexity of ${\cal O}(d \#P r^4)$. 
  In addition to the pure solve, we have to setup the least squares matrix, and we
  orthogonalize $G$ with respect to $s$. 

  The orthogonalization step is independent of $P$ and of negligible complexity ${\cal O}(dnr^3)$ \cite{Gr10,Os11}.
  Setting up the least squares matrix requires $\#P$ times the (partial) evaluation of the tensor $A^G$, which is of
  complexity ${\cal O}(dr^2)$ per entry, leading to a negligible complexity of ${\cal O}(r^2 d \# P)$.
\end{proof}

For the convergence of the ALS iteration, we state the result from \cite[Theorem 2.10]{RoUsch12}: under suitable
full rank assumptions on the Hessian in the local minimizer, the ALS iteration converges locally 
at least linearly to the local minimizer.
 
\subsection{The ADF Core Step}

The core step we outline below describes how the update of $G$ in Algorithm \ref{adf} in the unaccelerated case is performed.

\begin{theorem}[Core step of the ADF algorithm]\label{corestep}
  Let $s\in\{1,\ldots,d\}$. Without loss of generality, we assume that 
  $G$ is orthogonalized with respect to $s$ (cf. Remark \ref{noninj}).

  Then the minimizer $G_s$ in Algorithm \ref{adf}, for all $j\in \Index_s$, is given by
  \begin{align*}
  G_s(j) & = (G^{<s})^T \ Z_{(s)}(j) \ (G^{>s})^T \\ & = \sum_{i\in \Index, i_s=j}
  Z_{i} (G_1(i_1)\ldots G_{s-1}(i_{s-1}))^T (G_{s+1}(i_{s+1})\ldots G_d(i_d))^T.  \end{align*}

\end{theorem}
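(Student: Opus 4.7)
My plan is to reduce the minimization of $\|Z - A^G\|_F^2$ with respect to $G_s$ to a family of decoupled matrix least squares problems (one per slice $j\in\Index_s$) and then exploit the $s$-orthogonality of $G$ to solve them in closed form.

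First I would rewrite the objective via the block matricization identity $A^G_{(s)} = G^{<s}\,G_s\,G^{>s}$ (from the discussion around Figure \ref{pmoag}) together with $\|A\|_F = \|A_{(s)}\|$ and the slicewise decomposition $\|H\|^2 = \sum_j \|H(j)\|_F^2$ of the matrix block norm. This yields
\[ \|Z - A^G\|_F^2 \;=\; \sum_{j \in \Index_s} \bigl\| Z_{(s)}(j) - G^{<s}\, G_s(j)\, G^{>s} \bigr\|_F^2, \]
so the minimization splits into independent problems, one per $j$, each of the standard form "minimize $\|A - BXC\|_F^2$" with $B = G^{<s}$, $C = G^{>s}$ and unknown $X = G_s(j)$.

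The normal equations of such a problem read $B^T B X C C^T = B^T A C^T$. At this point I would invoke the $s$-orthogonality hypothesis: since $G_\mu$ is left orthogonal for $\mu<s$ and right orthogonal for $\mu>s$, iterated application of Corollary \ref{ookp} (which, via Lemma \ref{spoakp}, lifts orthogonality from the factors to their Kronecker products) gives $(G^{<s})^T G^{<s} = I_{r_{s-1}}$ and $G^{>s}(G^{>s})^T = I_{r_s}$. The normal equations therefore collapse to $G_s(j) = (G^{<s})^T\, Z_{(s)}(j)\, (G^{>s})^T$, which is the first claimed identity.

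For the expanded sum I would unwind the unfoldings using Definitions \ref{pm} and \ref{larpoar}: the $(i_1,\ldots,i_{s-1})$-row of $G^{<s}$ equals $G_1(i_1)\cdots G_{s-1}(i_{s-1})$, the $(i_{s+1},\ldots,i_d)$-column of $G^{>s}$ equals $G_{s+1}(i_{s+1})\cdots G_d(i_d)$, and the corresponding entry of $Z_{(s)}(j)$ is $Z_{i_1,\ldots,i_{s-1},j,i_{s+1},\ldots,i_d}$. Carrying out the two matrix-vector multiplications entrywise produces the advertised sum over all $i\in\Index$ with $i_s = j$. The main subtlety is the first step, correctly propagating orthogonality of individual blocks to their Kronecker products through Corollary \ref{ookp}; once that is in place, both identities drop out essentially by bookkeeping.
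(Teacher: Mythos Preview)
Your proposal is correct and follows essentially the same route as the paper: rewrite via the block matricization $A^G_{(s)} = G^{<s} G_s G^{>s}$, decouple into slice-wise least squares problems, and use the $s$-orthogonality to conclude that $(G^{<s})^T G^{<s}=I$ and $G^{>s}(G^{>s})^T=I$, from which the closed-form minimizer follows. The only cosmetic difference is that the paper phrases the last step as the equivalence $\argmin_{X}\|Z_{(s)}(j)-G^{<s}XG^{>s}\|=\argmin_{X}\|(G^{<s})^T Z_{(s)}(j)(G^{>s})^T-X\|$ (valid because $G^{<s}$ has orthonormal columns and $G^{>s}$ orthonormal rows), whereas you arrive at the same conclusion via the normal equations; both are equivalent one-line observations.
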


\begin{proof} By assumption, $G_1, \ldots, G_{s-1}$ are left orthogonal and $G_{s+1},\ldots, G_d$ right orthogonal. 
Therefore $G^{<s}$ has orthonormal columns and $G^{>s}$ orthonormal rows. Then
  \[
    G_s = \argmin_{G_s} \|Z-A^G\|_F 
    = \argmin_{G_s}   \|Z_{(s)}-A^G_{(s)}\|
    = \argmin_{G_s}  \|Z_{(s)}-G^{<s} \ G_s \ G^{>s}\|
   \]

   and, due to orthogonality, it follows that, for all $j\in \Index_s$,
  \[
    G_s(j) = \argmin_{G_s(j)} \|Z_{(s)}(j)-G^{<s} \ G_s(j) \ G^{>s}\| 
    = \argmin_{G_s(j)} \|(G^{<s})^T \ Z_{(s)}(j) \ (G^{>s})^T -G_s(j)\|.
  \]

\end{proof}

The core step above is formulated without any overrelaxation. The overrelaxation parameter $\alpha$ can 
however be included directly into the core step by modifying $Z$ as follows.

\begin{lemma}\label{z}
  Let $A^G=G_1\otimes\cdots\otimes G_d\in\mathbb{R}^\Index$ be given, $\alpha\in\mathbb{R}$, $Z\in\mathbb{R}^\Index$
  and 
  \[G_s^+ := \argmin_{\tilde{G}_s}\|Z_{(s)} 
  - G^{<s} \ \tilde{G}_s \ G^{>s} \|. \]
  Then $G^{\alpha}_s := \alpha G_{s}^+ + (1-\alpha) G_s$ satisfies
  \begin{equation} 
    G^{\alpha}_s = \argmin_{\tilde{G}_s}\|Z^\alpha_{(s)} - G^{<s} \ \tilde{G}_s \ G^{>s}\|
    \label{eq:overr}
  \end{equation}
  for $Z^\alpha := \alpha Z + (1-\alpha)A^G$. 
\end{lemma}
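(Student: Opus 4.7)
The plan is to exploit the fact that the least-squares problem in $\tilde G_s$ is linear, so the minimizing map $W \mapsto \argmin_{\tilde G_s} \|W - G^{<s}\ \tilde G_s\ G^{>s}\|$ (interpreted, if non-unique, via the pseudoinverse / normal equations) is a linear operator $L$ from data matrices $W$ to matrix blocks $\tilde G_s$ of the appropriate dimensions. Everything else in the statement is then a one-line linearity computation.

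First I would record the two concrete facts needed. Fact (a): by the definition of $G_s^+$ we have $G_s^+ = L(Z_{(s)})$. Fact (b): the current block satisfies $G_s = L(A^G_{(s)})$. For (b) observe that $A^G_{(s)} = G^{<s}\ G_s\ G^{>s}$ (this is the block matricization identity stated immediately after Definition \ref{pm} and illustrated in Figure \ref{pmoag}), so plugging $\tilde G_s := G_s$ yields residual zero in \eqref{eq:overr} and thus $G_s$ is the (resp.\ a) least-squares solution for the data $A^G_{(s)}$; the same choice is produced by $L$ either uniquely or, in the rank-deficient case, by fixing $L$ to be the Moore--Penrose solution operator, which is still linear.

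Next, because $Z^\alpha_{(s)} = \alpha Z_{(s)} + (1-\alpha) A^G_{(s)}$, linearity of $L$ gives
\[
L(Z^\alpha_{(s)}) \;=\; \alpha L(Z_{(s)}) + (1-\alpha) L(A^G_{(s)}) \;=\; \alpha G_s^+ + (1-\alpha) G_s \;=\; G^\alpha_s,
\]
which is exactly the claim. Alternatively, and perhaps more transparently in the orthogonalized setting used for Theorem \ref{corestep}, one may simply apply the closed form $L(W)(j) = (G^{<s})^T\ W(j)\ (G^{>s})^T$ slice-wise; linearity in $W$ is then visible by inspection, and $L(A^G_{(s)})(j) = (G^{<s})^T G^{<s}\ G_s(j)\ G^{>s}(G^{>s})^T = G_s(j)$ by the orthonormality of the columns of $G^{<s}$ and the rows of $G^{>s}$.

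I do not anticipate a genuine obstacle: the whole content is that an affine change of the right-hand side of a linear least-squares problem produces the same affine change in the minimizer. The only point that needs a brief word is the non-uniqueness case (when $G^{<s}$ or $G^{>s}$ is rank-deficient), which is handled either by assuming the $s$-orthogonalization of Remark \ref{noninj} (so that the closed form of Theorem \ref{corestep} applies and uniqueness is immediate) or by selecting $L$ as the pseudoinverse solution map, which preserves linearity.
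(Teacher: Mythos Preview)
Your argument is correct and is, in essence, the ``direct'' version of the paper's proof. Both rest on the same identity $A^G_{(s)} = G^{<s}\,G_s\,G^{>s}$ (Definition~\ref{pm}/Figure~\ref{pmoag}): you use it to conclude that $G_s$ is the least-squares solution for the data $A^G_{(s)}$, and then invoke linearity of the solution map $L$ to obtain $L(Z^\alpha_{(s)}) = \alpha L(Z_{(s)}) + (1-\alpha)L(A^G_{(s)}) = G^\alpha_s$. The paper instead argues by contradiction: writing an alleged better $\hat G_s$ as $\alpha\hat G_s^+ + (1-\alpha)G_s$ and using the same identity to cancel the $(1-\alpha)$ part, it reduces to $|\alpha|\,\|Z_{(s)} - G^{<s}\hat G_s^+ G^{>s}\| < |\alpha|\,\|Z_{(s)} - G^{<s}G_s^+ G^{>s}\|$, contradicting the definition of $G_s^+$. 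Your route is shorter and more structural; the paper's makes the role of $\alpha$ explicit and avoids having to name a solution operator.

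One small caveat on your non-uniqueness remark: choosing $L$ to be the Moore--Penrose solution map does preserve linearity, but then Fact~(b) need not hold, since the minimum-norm solution of $G^{<s}\tilde G_s G^{>s} = A^G_{(s)}$ is not in general the given $G_s$. Your other option---assuming the $s$-orthogonalization of Remark~\ref{noninj}, so that $G^{<s}$ has orthonormal columns and $G^{>s}$ orthonormal rows---does give uniqueness and makes Fact~(b) immediate, and this is exactly the setting in which the lemma is applied (Theorem~\ref{corestep}, Theorem~\ref{lucs}).
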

\begin{proof}
We assume, by contradiction, that there exists $\hat{G}_s\neq G^{\alpha}_s$ satisfying $\hat{G}_s = \alpha \hat{G}^+_s + (1-\alpha)G_s$ and
 \begin{equation*}
   \|Z^\alpha_{(s)} - G^{<s} \ \hat{G}_s \ G^{>s} \|< \|Z^\alpha_{(s)} - G^{<s} G^{\alpha}_s \ G^{>s} \|.
 \end{equation*}
 Inserting  $Z^\alpha$, $\hat{G}_s$ and $G^{\alpha}_s$ leads to
 \begin{align*}
   & \|Z^\alpha_{(s)} - G^{<s} \ \hat{G}_s \ G^{>s} \|=\| \alpha Z_{(s)}- \alpha G^{<s} \ \hat{G}^+_s \ G^{>s} + (1-\alpha) (A^G_{(s)}-G^{<s} \ G_s \ G^{>s})\|\\
   & < \|Z^\alpha_{(s)} - G^{<s} \ G^{\alpha}_s \ G^{>s} \|=\| \alpha Z_{(s)}- \alpha G^{<s} \ G_{s}^+ \ G^{>s} + (1-\alpha) (A^G_{(s)}-G^{<s} \ G_s \ G^{>s})\|
 \end{align*}
 which is equivalent to
 \begin{equation*}
   \alpha \|Z_{(s)}-G_{<s} \  \hat{G}^+_s \ G^{>s}\| < \alpha \|Z_{(s)}-G_{<s} \  G_{s}^+ \ G^{>s}\|.
 \end{equation*}
 This is a contradiction to the minimality of $G_{s}^+$. This proves that $G^{\alpha}_s$ is the minimizer of the minimization problem (\ref{eq:overr}).
\end{proof}

\begin{remark}[Denoting current and old representations within sweeps]\label{dcaor} The intermediate tensor $Z^{\alpha}$ is not updated along with the representation, but
in chosen increments, namely after each sweep. During each sweep, we denote with $G^-$ the old representation used for the last update of $Z^{\alpha}$ and with $G$
 the current representation. Therefore $Z^{\alpha}$ is always based on the old representation.
\end{remark}

\begin{theorem}[Practical ADF core step]\label{lucs}
  Under the assumptions of Theorem \ref{corestep}, the update block $G_s$ with 
  overrelaxation parameter $\alpha$ is given, for all $j\in\Index_s$, by
  \begin{align} 
    G_s(j) 
    = & \label{summand1}
    \underbrace{(G^{<s})^T \ (G^-)^{<s}}_{(LS^1_s)} 
    \, G^{-}_s(j) \,
    \underbrace{(G^-)^{>s} \ (G^{>s})^T}_{(LS^2_s)} \\
    & + \label{summand2}
    \sum_{\substack{i \in P, i_s = j}} \alpha (M_i - A^{G^{-}}_i) \,
    \underbrace{(G_1(i_1)\ldots G_{s-1}(i_{s-1}))^T}_{(LM^1_s)_i} \,
    \underbrace{(G_{s+1}(i_{s+1})\ldots G_d(i_d))^T}_{(LM^2_s)_i}
  \end{align}
  (The short notations are used for Lemma \ref{sc}.) 
\end{theorem}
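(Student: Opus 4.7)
The plan is to reduce the statement to Theorem \ref{corestep} (the unaccelerated core step) applied to the specific auxiliary tensor $Z^{\alpha}$ that drives the overrelaxed minimization in the current sweep. By Remark \ref{dcaor}, during a sweep $Z^{\alpha}$ is built from the frozen representation $G^-$: we have $Z_i = M_i$ for $i\in P$ and $Z_i = A^{G^-}_i$ otherwise, so
\[
Z^{\alpha}_i \;=\; \alpha Z_i + (1-\alpha)A^{G^-}_i \;=\; A^{G^-}_i \;+\; \alpha\,\chi_P(i)\,(M_i - A^{G^-}_i).
\]
In other words, $Z^{\alpha}$ splits cleanly as $A^{G^-}$ plus a correction supported on $P$.

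By Lemma \ref{z}, the overrelaxed update $G_s$ is the exact minimizer of $\|Z^{\alpha}_{(s)}-G^{<s}\tilde{G}_s G^{>s}\|$, and since by hypothesis $G$ is $s$-orthogonal, the formula of Theorem \ref{corestep} applies with $Z$ replaced by $Z^{\alpha}$, giving $G_s(j) = (G^{<s})^T Z^{\alpha}_{(s)}(j)(G^{>s})^T$. I would then substitute the decomposition above and treat the two contributions separately. For the $A^{G^-}$ part, the block-matricization identity from Figure \ref{pmoag}, $A^{G^-}_{(s)}(j) = (G^-)^{<s}\,G^-_s(j)\,(G^-)^{>s}$, immediately gives
\[
(G^{<s})^T(G^-)^{<s}\,G^-_s(j)\,(G^-)^{>s}(G^{>s})^T \;=\; (LS^1_s)\,G^-_s(j)\,(LS^2_s),
\]
which is precisely summand (\ref{summand1}).

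For the $P$-supported correction, the matrix $Z^{\alpha}_{(s)}(j) - A^{G^-}_{(s)}(j)$ has entries $\alpha(M_i - A^{G^-}_i)$ at positions $((i_1,\ldots,i_{s-1}),(i_{s+1},\ldots,i_d))$ for $i\in P$ with $i_s = j$, and is zero elsewhere. Left-multiplication by $(G^{<s})^T$ selects the transpose of the row of $G^{<s}$ indexed by $(i_1,\ldots,i_{s-1})$, and right-multiplication by $(G^{>s})^T$ selects the transpose of the column of $G^{>s}$ indexed by $(i_{s+1},\ldots,i_d)$. By Definition \ref{larpoar} these rows and columns are exactly $G_1(i_1)\cdots G_{s-1}(i_{s-1})\in\mathbb{R}^{1\times r_{s-1}}$ and $G_{s+1}(i_{s+1})\cdots G_d(i_d)\in\mathbb{R}^{r_s\times 1}$, whose transposes are the column vector $(LM^1_s)_i$ and the row vector $(LM^2_s)_i$. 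Summing the resulting rank-one outer products over $i\in P$ with $i_s = j$ gives summand (\ref{summand2}).

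The only genuine work lies in the index bookkeeping in this last step: one must keep the current $s$-orthogonal representation $G$ and the frozen representation $G^-$ strictly separate, and track which row or column of an unfolding is selected by a one-hot entry in a given slice. Once the split of $Z^{\alpha}$ into its $A^{G^-}$ contribution and its $P$-supported contribution has been made, the two summands fall out with no further computation.
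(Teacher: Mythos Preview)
Your proposal is correct and follows essentially the same route as the paper: apply Lemma \ref{z} and Theorem \ref{corestep} to obtain $G_s(j) = (G^{<s})^T Z^{\alpha}_{(s)}(j)(G^{>s})^T$, split $Z^{\alpha} = A^{G^-} + \alpha(M|_P - A^{G^-}|_P)$, and treat the two summands separately via the block matricization of $A^{G^-}$ and the restriction of the summation to $P$. Your treatment of the second summand is a bit more explicit about the row/column selection than the paper's, but the argument is the same.
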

\begin{proof} According to Theorem \ref{corestep} and Lemma \ref{z}, we have
  \begin{equation} 
    G_s(j) = (G^{<s})^T \ Z^\alpha_{(s)}(j) \ (G^{>s})^T.
    \label{gsz}
   \end{equation}
  $Z = A^{G^{-}}|_{\Index \setminus P} + M|_P$ (cf. Algorithm \ref{adf}) and $Z^{\alpha} = \alpha Z + (1-\alpha) A^{G^{-}} $ (cf. Lemma \ref{z}) yield
  \[ Z^\alpha = \underbrace{A^{G^{-}}}_{\hookrightarrow \text{First summand}} 
  + \underbrace{\alpha (M|_P - A^{G^{-}}|_P)}_{\hookrightarrow \text{Second summand}}\]
  which we insert into (\ref{gsz}).\\
  \textbf{First summand:} 
  Recall that $A^{G^{-}}_{(s)}$ can be expanded (cf. Figure \ref{pmoag}). From the definition of
  $(G^{<s})$ and $(G^{>s})$ (cf. Definition \ref{larpoar}), we derive that
  \begin{equation} 
    (G^{<s})^T \ A^{G^{-}}_{(s)}(j) \ (G^{>s})^T = (G^{<s})^T \ (G^-)^{<s}
    \ G^{-}_s(j) \ (G^-)^{>s} \ (G^{>s})^T
    \label{qap}
  \end{equation}
 
  \textbf{Second summand:} 
  As $(M|_P - A^{G^-}|_P)_i = 0$ for all $i \notin P$, we can reduce the summation from $\Index$ to $P$ and obtain the formula stated in the theorem. 
\end{proof}

\subsection{Computational Complexity of ADF}

The statements presented in this subsection are based on the sweep with order $1 \rightarrow d$, but can be 
transfered to permutations.

\begin{lemma}[Successive computing]\label{sc}
  The occuring terms in the core step (Theorem \ref{lucs}) during the sweep ($s = 1 \rightarrow d$) can be reduced 
  to simpler successive computations. Note that in step $s$, the right matrix blocks $G_{s+1},\ldots,G_d$ are 
  unchanged and equal to those of the old representation $G^-$. We then have that
  \begin{align} (LS^1_{s}) = \langle G^T_{s-1}, (LS^1_{s-1}), G^-_{s-1} \rangle, \label{sc1}
  \end{align}
  where $(LS^1_1) = 1$, while $(LS^2_s) = I$ (the identity matrix) due to the orthogonality conditions. Likewise
  \begin{align}
    (LM^1_s)_i & = G_{s-1}(i_{s-1})^T \ (LM^1_{s-1})_i , \label{sc3}\\
    (LM^2_s)_i & = (LM^2_s)_i \, G^-_{s+1}(i_{s+1})^T   \label{sc4}
  \end{align}
  where $(LM^1_1) = 1$. Hence, while $(LS^1)$ and $(LM^1)$ are updated within the sequence, $(LM^2)$ is 
  calculated before. Furthermore, $(LM^1_s)$ and $(LM^2_s)$ can be used to update $A^{G^-}|_P$.
\end{lemma}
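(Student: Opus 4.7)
The plan is to treat the four claims separately, since each term is defined by a closed-form expression and the recurrences amount to peeling off one index from those expressions. The nontrivial identity is (\ref{sc1}), which truly exploits the matrix-block calculus of Section \ref{sec:tensorcalculus}; the other three reduce to bookkeeping.

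First I would dispose of (\ref{sc3}) and (\ref{sc4}). Both follow directly from Definition \ref{kron} and the definition of $(LM^1_s)_i,(LM^2_s)_i$ given in (\ref{summand2}): for (\ref{sc3}), simply split off the last factor in the product $G_1(i_1)\cdots G_{s-1}(i_{s-1})$ and transpose, giving $G_{s-1}(i_{s-1})^T(LM^1_{s-1})_i$. The base case $(LM^1_1)_i = 1$ corresponds to the empty Kronecker product convention from Section \ref{sec:tensorcalculus}. The formula (\ref{sc4}) is analogous and should be read as the backward recursion $(LM^2_{s-1})_i = (LM^2_s)_i\,G^-_s(i_s)^T$, showing that $(LM^2_s)_i$ may be precomputed in one backward sweep through $s=d,\ldots,1$ before the forward update pass starts. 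Once $(LM^1_s)_i$ and $(LM^2_s)_i$ are both available, recomputing $A^{G^-}_i = (LM^1_s)_i^T\,G^-_s(i_s)\,(LM^2_s)_i^T$ on $P$ costs only the final triple product.

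Next I would handle $(LS^2_s)=I$. By hypothesis $G$ is $s$-orthogonal, so $G_{s+1},\ldots,G_d$ are right orthogonal; moreover, during step $s$ of the forward sweep these blocks still coincide with $G^-_{s+1},\ldots,G^-_d$, so $(G^-)^{>s}=G^{>s}$. The claim then reduces to $G^{>s}(G^{>s})^T=I$, i.e.\ $\langle G_{s+1}\otimes\cdots\otimes G_d,(G_{s+1}\otimes\cdots\otimes G_d)^T\rangle = I$, which follows by an immediate induction on the number of factors using the right-orthogonal case of Corollary \ref{ookp}.

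The main step is (\ref{sc1}). Using Remark \ref{cobo} and Definition \ref{sp},
\[
(LS^1_s) = (G^{<s})^T\,(G^-)^{<s} = \rhb{(G_1\otimes\cdots\otimes G_{s-1})^T}\,\lhb{G^-_1\otimes\cdots\otimes G^-_{s-1}} = \langle (G_1\otimes\cdots\otimes G_{s-1})^T,\,G^-_1\otimes\cdots\otimes G^-_{s-1}\rangle.
\]
Setting $A := G_1\otimes\cdots\otimes G_{s-2}$, $B := G_{s-1}$, $C := G^-_1\otimes\cdots\otimes G^-_{s-2}$, $D := G^-_{s-1}$ and applying Lemma \ref{spoakp} gives
\[
\langle (A\otimes B)^T,\,C\otimes D\rangle = \langle B^T,\,\langle A^T,C\rangle,\,D\rangle = \langle G_{s-1}^T,\,(LS^1_{s-1}),\,G^-_{s-1}\rangle,
\]
which is exactly the claim. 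The base case $(LS^1_1)=1$ follows because $G^{<1}\in\mathbb{R}^{1\times 1}$ equals $1$ by the empty-product convention of Definitions \ref{kron} and \ref{larpoar}. The hard part, if any, is purely notational: one has to carefully track that during step $s$ the blocks on the left are the already-updated $G_1,\ldots,G_{s-1}$ while on the right one still sees $G^-_1,\ldots,G^-_{s-1}$, so the recursion for $(LS^1_s)$ really does consume one fresh factor of each on every step, and no reorthogonalization obligation is incurred.
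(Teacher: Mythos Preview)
The paper does not give a proof of this lemma at all; it is stated and the text moves directly on to Lemma \ref{pi}. Your argument is correct and is precisely the route the authors set up in Section \ref{sec:tensorcalculus}: the identity $(LS^2_s)=I$ is the repeated application of Corollary \ref{ookp}, and the recursion for $(LS^1_s)$ is obtained by rewriting $(G^{<s})^T(G^-)^{<s}$ as a scalar product via Remark \ref{cobo} and Definition \ref{sp} and then peeling off one Kronecker factor with Lemma \ref{spoakp}. The recurrences for $(LM^1_s)_i$ and $(LM^2_s)_i$ are, as you say, pure bookkeeping from the definition in (\ref{summand2}), together with the observation that the right blocks still equal $G^-_{s+1},\ldots,G^-_d$ at step $s$; your reading of (\ref{sc4}) as the backward recursion $(LM^2_{s-1})_i=(LM^2_s)_i\,G^-_s(i_s)^T$ is the intended one. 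One small caveat: your remark that $A^{G^-}_i=(LM^1_s)_i^T\,G^-_s(i_s)\,(LM^2_s)_i^T$ is not literally correct during the sweep, since $(LM^1_s)_i$ is built from the already updated blocks $G_1,\ldots,G_{s-1}$ rather than $G^-_1,\ldots,G^-_{s-1}$; the statement in the paper that these quantities ``can be used to update $A^{G^-}|_P$'' refers to building the evaluations for the \emph{next} sweep (when the current $G$ becomes the new $G^-$), which is how it is accounted for in item 4 of the proof of Lemma \ref{pi}. This does not affect the correctness of the recurrences themselves.
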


\begin{lemma}[Computational complexity]\label{pi}
  Let $r := max\{r_1,\ldots,r_{d-1}\}$ and $n:= max\{n_1,\ldots,n_d\}$.
  The complexity for one full sweep of updating $Z,G_1,\ldots,G_D$ in the ADF iteration is
  \[{\cal O}(r^3dn + r^2d\#P).\]
\end{lemma}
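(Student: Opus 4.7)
The plan is to account for the cost of each subtask identified in Lemma \ref{sc} and to sum the contributions. In a sweep with order $s = 1, \ldots, d$ we must (i) pre-compute the right helpers $(LM^2_s)_i$ for every $i \in P$ and every $s$ before the sweep starts, since they involve the right blocks $G^-_{s+1}, \ldots, G^-_d$ which remain unchanged during the sweep; (ii) at each step $s$ update the cumulative $(LS^1_s)$ via the recursion \eqref{sc1} and the cumulative $(LM^1_s)_i$ via \eqref{sc3}; (iii) assemble the new $G_s(j)$ using the two summands in Theorem \ref{lucs}; (iv) re-orthogonalize $G$ at position $s$ so that the assumptions of Theorem \ref{corestep} are maintained; and (v) maintain $A^{G^-}|_P$, which by the last remark of Lemma \ref{sc} can be read off from the $(LM)$ products.

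I will then bound each cost separately. The recursion $(LS^1_s) = \langle G^T_{s-1},(LS^1_{s-1}),G^-_{s-1}\rangle$ requires, for each of the $n_{s-1}$ slices, two $r\times r$ matrix products, giving $\mathcal{O}(nr^3)$ per step and $\mathcal{O}(dnr^3)$ across the sweep; the identity $(LS^2_s)=I$ costs nothing. For the sample-indexed helpers, $(LM^1_s)_i$ has size $r_{s-1}\times 1$, so its update by $G_{s-1}(i_{s-1})^T(LM^1_{s-1})_i$ is an $\mathcal{O}(r^2)$ matrix-vector product; done once per $i\in P$ and per $s$, this totals $\mathcal{O}(r^2 d\#P)$, and the same bound applies to the analogous pre-computation of $(LM^2_s)_i$. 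The first summand of the core step reduces to two $r\times r$ products for each of the $n_s$ slices, i.e. $\mathcal{O}(nr^3)$ per step; the second summand is a sum of outer products $(LM^1_s)_i (LM^2_s)_i \in \mathbb{R}^{r_{s-1}\times r_s}$ weighted by the scalar $\alpha(M_i-A^{G^-}_i)$, each contributing $\mathcal{O}(r^2)$ work, for a total $\mathcal{O}(r^2\#P)$ per step and $\mathcal{O}(r^2 d\#P)$ across the sweep. Finally, the sequential QR-based re-orthogonalization of $G$ requires $\mathcal{O}(dnr^3)$ operations (a standard cost, e.g.\ \cite{Gr10,Os11}).

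Adding the contributions yields $\mathcal{O}(dnr^3) + \mathcal{O}(r^2 d\#P)$, which is the claimed bound. The main thing to be careful about is that $Z$ never needs to be formed explicitly: Theorem \ref{lucs} already expresses the update of $G_s$ purely in terms of $(LS^1_s)$, $G^{-}_s(j)$, and the sample-indexed residuals $M_i - A^{G^-}_i$ at the points in $P$, so the nominal $\mathcal{O}(n^d)$ cost of handling $Z$ is avoided. The only subtle point, and the one I would check carefully, is that the vectors $(LM^1_s)_i, (LM^2_s)_i$ really have one of their dimensions equal to one, so that their combination with the scalar residual factor in the second summand yields a rank-one $r_{s-1}\times r_s$ update of genuine cost $\mathcal{O}(r^2)$ rather than $\mathcal{O}(r^3)$; this is where the $r^2$ in the bound $\mathcal{O}(r^2 d\#P)$ (as opposed to $r^3$) comes from, and it relies on the endpoint convention $r_0 = r_d = 1$ from Definition \ref{TT tensor format}.
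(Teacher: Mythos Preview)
Your proof is correct and follows essentially the same approach as the paper: you enumerate the same subtasks from Lemma \ref{sc} and Theorem \ref{lucs}, bound each with the same per-step costs (${\cal O}(nr^3)$ for the $(LS^1)$ recursion and summand~\eqref{summand1}, ${\cal O}(\#P r^2)$ for the $(LM^1),(LM^2)$ updates and summand~\eqref{summand2}, ${\cal O}(nr^3)$ for orthogonalization), and multiply by $d$. Your explicit remark that the $(LM)$ factors are vectors by virtue of $r_0=r_d=1$, which is what keeps the second summand at ${\cal O}(r^2)$ rather than ${\cal O}(r^3)$, is a helpful clarification the paper leaves implicit.
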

\begin{proof}
We analyze the operations in Lemma \ref{sc} and Theorem \ref{lucs} for a step $s$ within a sweep $s=1\to d$:
\begin{enumerate}
\item $(\ref{sc1})$: $2n$ times an $(r \times r)$ times $(r \times r)$ matrix multiplication: 
  ${\cal O}(nr^3)$.
\item $(\ref{sc3})$ \& $(\ref{sc4})$: 2$\#P$ times an $(1 \times r)$ times $(r \times r)$ matrix multiplication:
  ${\cal O}(\#Pr^2)$.
\item $(\ref{summand1})$: $2n$ times an $(r \times r)$ times $(r \times r)$ matrix multiplication:
  ${\cal O}(nr^3)$.
\item $p$ times evaluation of $A^{G^-}$, by using the values $(LM^1_s)$,$(LM^2_s)$: 
  ${\cal O}(\#Pr^2)$.
\item $(\ref{summand2})$: $\#P$ times an $(r \times 1)$ times $(1 \times r)$ matrix multiplication:
  ${\cal O}(\#Pr^2)$.  
\item switching orthogonality of $G$: one QR decomposition of an $nr \times r$ matrix and 
  $n$ times an $(r \times r)$ times $(r \times r)$ matrix multiplication:
  ${\cal O}(nr^3)$.
\end{enumerate}
Each of these steps is performed ${\cal O}(d)$ times. 

This leaves us with the computational complexity of one left-hand sweep of
$\mathcal{O}(r^3dn + r^2d\#P)$. 

\end{proof}

\begin{remark}[Complexity of ALS and ADF]
  The computational complexity of one ADF sweep is in ${\cal O}(r^3dn+r^2d\#P)$, whereas
  an ALS sweep is in ${\cal O}(r^4d\#P)$ (cf. Lemma \ref{re:comp}), i.e., 
  asymptotically an ADF step is by a factor $r^2$ faster than an ALS step.
  In the numerical examples section we compare the speed and the necessary number of iterations
  for several examples.  
\end{remark}

\subsection{Preliminary choice of the SOR Parameter $\alpha$ and Stopping Criterion}

By an optimized determination of the acceleration parameter $\alpha$, one can speed up the convergence of the ADF algorithm considerably. Therefore, after each sweep of the ADF Algorithm \ref{adf}, we allow a relatively expensive search for a suitable $\alpha$ by testing increased ($\alpha^{up}$) and reduced ($\alpha^{down}$) values of $\alpha$ until the residual decays (or we break). The corresponding representations are denoted by $G^{up},G^{down}$, and the
direction (up, down or back) is denoted by ${\tt dir}$. 
The residual error is denoted as above by $Res(G):=\|A^G-M\|_P$.

\begin{remark}[Determination of the overrelaxation $\alpha$]\label{alpha}\label{iap}
  To handle the acceleration parameter $\alpha$, we introduce a second parameter $\delta$, an increment parameter.
  Each sweep is run for two different accelerations
  ($\alpha^{up},\alpha^{down}$):
  \[ \alpha^{up} := \alpha + \delta, \quad \alpha^{down} := max\{1, \alpha - \delta / 5\}. \]
  This choice ensures that the overrelaxation parameter is at least $\alpha\ge 1$.
  Depending on the residuals of the results, one of the three directions  
  is chosen as specified in Algorithm \ref{dir}. 
  It determines the new $\alpha$, $\delta$ as well as $G$. 
  
  In order to estimate and understand the magnitude of $\alpha$, 
  one can view the summand (\ref{summand2}) as a spot-check evaluation of the same term but
  for $P = \Index$, which would represent a full, maximal sampling set. 
  Therefore, it has to be multiplied by $\frac{\#\Ind}{\# P}$. 
  For the initial acceleration parameters needed for the ADF algorithm, we obtain 
  \[ \alpha := \frac{\#\Ind}{\# P}, \quad \delta := \frac{\alpha}{4}. \] 
\end{remark}

\begin{algorithm}[htb]
  \caption{Choice of the SOR parameter $\alpha$\label{dir}}
  \begin{center}
     \fbox{{\bf Notation:} {
        $\searrow$ $\delta$ means $\delta := \frac{1}{2}\delta$,
        \hspace{0.5cm}
        $\nearrow$ $\delta$ means $\delta := \min(\alpha^{back}/10,1.2 \delta)$ 
    } }
  \end{center}
  \begin{algorithmic}
    \STATE The values $G^{up}$ and $G^{down}$ for overrelaxations $\alpha^{up}$ and $\alpha^{down}$ are already computed
    \IF{$Res(G^{up}) > Res(G)$ and $Res(G^{down}) > Res(G)$}
    \STATE Set $\alpha:=\frac{1}{2}(1+\alpha), \searrow\delta$ and ${\tt dir}:=back$, then
    recompute $G^{up}$ and $G^{down}$ \\
    Restart Algorithm \ref{dir} (break if this happens more than $10$ times in a row);
    \ELSIF{$(Res(G^{up}) < Res(G^{down}))$} 
    \STATE If ${\tt dir}=up$ then $\nearrow$ $\delta$, otherwise $\searrow$ $\delta$; 
    \STATE $\alpha := \alpha^{up}$;  $G := G^{up}$;  ${\tt dir} := up$;
    \ELSIF{$(Res(G^{down}) \le Res(G^{up}))$}
    \STATE If ${\tt dir}=down$ then $\nearrow$ $\delta$, otherwise $\searrow$ $\delta$; 
    \STATE $\alpha := \alpha^{down}$; $G := G^{down}$;  ${\tt dir} := down$;
    \ENDIF
  \end{algorithmic}
\end{algorithm}

Finally, we need an adaptive reliable stopping criterion in conjunction with the rank-increasing strategy discussed previously.

\begin{remark}[Stopping criteria]\label{bc}
  We denote again by $\mean{\gamma}_5$ the arithmetic mean of the last $5$ residual reduction factors
  \[\gamma_i := \frac{Res(G^i)}{Res(G^{i-1})},\qquad  i={\tt iter}-4 \ldots {\tt iter}.\]
  Our first stopping criterion is simply like for ALS
  \[|1-\mean{\gamma}_5| < \varepsilon_{stop}\] 
  with $\varepsilon$ between $10^{-2}$ and $10^{-5}$.
 
  However, this is only tested if the direction is ${\tt dir}=down$ or the last residual reduction fulfils:
  $|1-\frac{\gamma_i}{\gamma_{i-1}}| < 10^{-7}$. 
  Note that we cannot compare the specific $\varepsilon_{stop}$ of ADF with the one of ALS, as ADF is faster in time but
  with smaller residual reduction per iteration.
  Our second stopping criterion is: 
  Stop if the last $10$ directions were ${\tt dir}=back$, meaning
  there is no residual reduction even if the SOR parameter $\alpha$ approaches 1.
\end{remark}

A detailed analysis by numerical experiments on the optimality of $\alpha$ from the above heuristic is given in 
the supplementary material. We can summarize that even an expensive line search to determine 
the optimal $\alpha$ for each sweep gives almost the same results as the simple heuristic. 
This motivates the simplified determination of $\alpha$ in the next subsection.

\subsection{Automated Overrelaxation in Microsteps}

The idea for the automated overrelaxation is not to choose one $\alpha$ for the whole
sweep $s=1\to d$ but rather a different $\alpha=\alpha(s)$ for each (micro-) step $s$. 
As it will turn out this enables us to determine the optimal $\alpha(s)$ and interprete
the iteration as an approximate ALS iteration.  

\begin{definition}[Residual tensor and matrix block projection]\label{ResT}
  We define the residual tensor $S$ and the matrix block projection $P_s$ via
  \begin{align*}
    S^G_M & := (M - A^G)|_P, \quad (A_{(s)})|_{P_s} := (A|_P)_{(s)},
  \end{align*}
  such that for any $s \in D$: $(S^G_M)_{(s)} = (M_{(s)} - G^{<s} \ G_s \ G^{>s})|_{P_s} $. 
  When the context is clear, we skip the indices $M$ or $G$.
\end{definition}

We recall that the tensor $Z^\alpha$ is given by
\[ Z^\alpha = A^{G^{-}} + \alpha (M|_P - A^{G^{-}}|_P) = A^{G^{-}} + \alpha S^{G^-}. \]
If we assume that $G=G^-$ is $s$-orthogonal and we determine the update only in direction $s$
(instead of the whole sweep $1\to d$), then the update used in ADF simplifies to

\begin{align*} 
  G^{\alpha}_s(j) & =  (G^{<s})^T \ Z^\alpha_{(s)}(j) \ (G^{>s})^T. \\
  & =  
  \underbrace{(G^{<s})^T \ (G^-)^{<s}}_{= I} 
  \, G^{-}_s(j) \,
  \underbrace{(G^-)^{>s} \ (G^{>s})^T}_{= I} \\
  & \quad + \alpha
  \underbrace{\sum_{\substack{i \in P, i_s = j}}  (M_i - A^{G^{-}}_i) \,
    (G_1(i_1)\ldots G_{s-1}(i_{s-1}))^T \,
    (G_{s+1}(i_{s+1})\ldots G_d(i_d))^T}_{=: N(j)} \\
  & =  \, G^{-}_s(j) \, + \alpha N(j).
\end{align*}
For the whole matrix block this is $N = {G^{<s}}^T \ S^{G^-}_{(s)} \ {G^{>s}}^T$. 

\begin{lemma}[Optimal acceleration]\label{AOG_alpha}
  The optimal overrelaxation parameter 
  \[
  \alpha^* := \alpha^*(s) := \argmin_{\alpha} \|G^{<s} \ G^{\alpha}_s \ G^{>s} - M_{(s)}\|_{P_s}
  \]
  for the update of block $s$ is given by
  \[
  \alpha^* = \| N \|^2_F /   \| G^{<s} \ N \ G^{>s} \|^2_{P_s}.
  \]  
\end{lemma}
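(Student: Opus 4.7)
The plan is to exploit that the objective, as a function of $\alpha\in\mathbb{R}$, is simply a one-dimensional quadratic, so the optimum is found by differentiating and solving a scalar linear equation. The only thing that needs real care is identifying the inner product that arises with the compact form $\|N\|_F^2$.

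First I would rewrite the argument. Since $G$ differs from $G^-$ only in block $s$, we have $G^{<s}=(G^-)^{<s}$ and $G^{>s}=(G^-)^{>s}$, so $G^{<s} G^-_s G^{>s} = A^{G^-}_{(s)}$. Using $G^\alpha_s = G^-_s + \alpha N$ and the fact that $S^{G^-}_{(s)}$ already vanishes outside $P_s$, the residual inside $P_s$ becomes
\[
G^{<s} G^\alpha_s G^{>s} - M_{(s)} \;=\; -S^{G^-}_{(s)} + \alpha\,G^{<s} N G^{>s}\qquad\text{on }P_s.
\]
Writing $S:=S^{G^-}_{(s)}$ and $B:=G^{<s} N G^{>s}$, the function to minimize is
\[
f(\alpha) \;=\; \|S\|_{F}^{2} \;-\; 2\alpha\,\langle S,\,B\rangle_{P_s} \;+\; \alpha^{2}\,\|B\|_{P_s}^{2}.
\]
Setting $f'(\alpha)=0$ yields the scalar stationarity condition
\[
\alpha^{*} \;=\; \frac{\langle S,\,B\rangle_{P_s}}{\|B\|_{P_s}^{2}},
\]
and because $S$ is already supported in $P_s$ the restriction in the numerator is vacuous: $\langle S, B\rangle_{P_s} = \langle S, B\rangle_{F}$.

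The main obstacle (and the only nonroutine calculation) is to identify this numerator with $\|N\|_F^2$, which is what puts the formula into the clean form stated. Here I use the definition $N = (G^{<s})^{T}\,S^{G^-}_{(s)}\,(G^{>s})^{T}$ from the paragraph preceding the lemma, together with cyclicity of the trace:
\[
\langle S,\,G^{<s} N G^{>s}\rangle_{F} \;=\; \operatorname{tr}\!\bigl(S^{T} G^{<s} N G^{>s}\bigr) \;=\; \bigl\langle (G^{<s})^{T} S\,(G^{>s})^{T},\,N\bigr\rangle_{F} \;=\; \langle N,N\rangle_{F} \;=\; \|N\|_{F}^{2}.
\]
Substituting into the formula for $\alpha^{*}$ gives exactly $\alpha^{*} = \|N\|_{F}^{2}/\|G^{<s} N G^{>s}\|_{P_s}^{2}$.

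Finally I would note that $f''(\alpha)=2\|B\|_{P_s}^{2}\ge 0$, so $f$ is convex and the critical point is indeed a minimizer (and the denominator is nonzero whenever $N\neq 0$, which is the only case where there is anything to minimize; otherwise $G^\alpha_s$ is independent of $\alpha$). Everything else is already provided by Lemma \ref{z}, Theorem \ref{corestep}, and the matricization identity $\|A\|_F = \|A_{(s)}\|$ that lets the tensor-norm problem be treated in the matrix Frobenius norm.
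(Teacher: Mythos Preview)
Your proof is correct and follows essentially the same route as the paper: both recognize the objective as a one-dimensional quadratic in $\alpha$, write down the stationarity condition $\alpha^*=\langle S,B\rangle/\|B\|_{P_s}^2$, and then use trace cyclicity together with $N=(G^{<s})^T S^{G^-}_{(s)}(G^{>s})^T$ to collapse the numerator to $\|N\|_F^2$. Your version is slightly more explicit (you expand $f(\alpha)$ and add the convexity/second-derivative check), whereas the paper writes out the trace manipulation blockwise over $j$, but the substance is identical.
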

\begin{proof}
  The optimal $\alpha^*$ from the quadratic minimization is
  \[\alpha^*=
  \langle G^{<s} \ N  \ G^{>s},\, (M_{(s)} - G^{<s} \ G^{-}_s  \ G^{>s})|_{P_s}\rangle / \| G^{<s} \ N \ G^{>s}\|^2_{P_s}.
  \]
Finally, the trace properties can be used to simplify the nominator:
  \begin{align*} 
    & \,\,\, \langle G^{<s} \ N \ G^{>s}, (M_{(s)} - G^{<s} \ G^{-}_s  \ G^{>s})_{P_s} \rangle \\
    & =  \sum_{j=1}^n \ trace((G^{<s} \ N(j) \ G^{>s})^T \ (M_{(s)}(j) - G^{<s} \ G^{-}_s(j)  \ G^{>s})_{P_s}) \\
    &=  \sum_{j=1}^n \ trace(N(j)^T \ {G^{<s}}^T \ (M_{(s)}(j) - G^{<s} \ G^{-}_s(j)  \ G^{>s})_{P_s} \ {G^{>s}}^T) \\
    &=  \langle N, {G^{<s}}^T \ (M_{(s)} - G^{<s} \ G^{-}_s  \ G^{>s})_{P_s} \ {G^{>s}}^T) \rangle \\
    &=  \langle N, {G^{<s}}^T \ S^{G^-}_{(s)} \ {G^{>s}}^T \rangle  = \langle N, N \rangle 
  \end{align*} 
\end{proof}

Note that the change in the residual tensor has already been calculated for the determination of $\alpha^*$:
$S^G_{(s)} = S^{G^-}_{(s)} - \alpha \ (G^{<s} \ N \ G^{>s})|_{P_s}$. 
Furthermore $\|S^{G^-}\|^2 - \|S_{G}\|^2 = \alpha \ ||N||^2 $. 
We summarize the final ADF in Algorithm \ref{AOG_alg}.

\begin{algorithm}[!htb]
  \caption{Rank increasing ADF algorithm\label{AOG_alg}}
  \begin{algorithmic}
    \STATE Initialize the representation $G$ for $r = 1$ (Remark \ref{iv});
    $S := S^G_M$ (Definition \ref{ResT});
    \FOR{$r=1,\ldots,r_{final}$}
      \FOR{${\tt iter} = 1, \ldots, {\tt iter}_{max}$}
	\FOR{$s=1,\ldots,d$}
	  \STATE $s$-orthogonalize $G$ and calculate  
	  \[ N := {G^{<s}}^T \ S_{(s)} \ {G^{>s}}^T, \quad Z_N := ({G^{<s}} \ N \ {G^{>s}})_{P_s} \] 
	  \STATE set $\alpha := \frac{ \| N \|^2_F}{  \| Z_N \|^2_{P_s}}$; \COMMENT{Lemma \ref{AOG_alpha}, Remark \ref{nua} resp.}
	  \STATE update
	  \[ G_s := G_s + \alpha N, \quad S_{(s)} := S_{(s)} - \alpha Z_N \]
	\ENDFOR
	\IF{breaking criteria apply}     
	\STATE stop the {\tt iter} loop;
	\COMMENT {Remark \ref{bcls}}
	\ENDIF
      \ENDFOR
      \STATE adapt representation to $r+1$; 
      \COMMENT{Remark \ref{athr}}
    \ENDFOR
  \end{algorithmic}
\end{algorithm}

\begin{remark}[Overrelaxation in Microsteps]\label{nua}
  The overrelaxation parameter $\alpha$ does not need to be uniform for the 
  whole block $G_s$. We can proceed with each part $G_s(j), j = 1,\ldots,n_s$ 
  seperately due to their independency. $N$ and $Z_N$ remain the same and the 
  optimal $\alpha^*_j$ for slice $j$ is
  \[
  \alpha_j^* = \frac{ \| N(j) \|^2_F}{  \| G^{<s} \ N(j) \ G^{>s} \|^2_{P^j_s}}, 
  \quad \mbox{for } (A_{(s)}(j))|_{P^j_s} := (A|_P)_{(s)}(j), \ j = 1,\ldots,n_s.
  \] 
  Hence, we update $G_s(j) = {G^-_s}(j) + \alpha_j N(j)$. This is what we use in practice as 
  it typically gives a lower residual for the same computational complexity.
\end{remark}

Finally, we can interprete the ADF iteration with overrelaxation in microsteps as an 
approximate ALS iteration: 
The block $N$ as defined above is the gradient of the residual function in mode $s$. 
That is, for
\[
  R_s:  \ \R^{r_{s-1} \times r_{s} \times n_s} \rightarrow \R,\quad
   G_s \mapsto \frac{1}{2} \| M - A^G \|_P^2,
\]
we have $N = \nabla R_s$. Therefore the ADF (micro-) step is an alternating best approximation 
of the blocks $G_s$, $s=1\to d$, but only in the direction $N$ of steepest descent (after 
$s$-orthogonalization). In the
numerical examples we observe that indeed ADF requires a few more iterative steps, but since
the complexity is by a factor $r^2$ lower, this is advantageous.

\section{Numerical Experiments}\label{sec:numexp}

\subsection{Data Aquisition and Measurements}

\textit{Sampling:} In order to obtain a sufficient slice density, cf. Definition \ref{def:fibre_density}, we generate the set
$P$ in a quasi-random way as follows: For each direction $\mu = 1,\ldots,d$ and each index 
$i_{\mu}\in\Index_\mu$ we pick $C_{SD} r^2$ indices $i_1,\ldots,i_{\mu-1},i_{\mu+1},\ldots,i_d$ 
at random (uniformly). This gives in total $\# P = d n C_{SD} r^2$ samples (excluding some exceptions), 
where $C_{SD}$ is the slice density from Definition \ref{def:fibre_density}. 
  As a control set $C$, we use a set of the same cardinality as $P$ that is generated in the same way. 

\textit{Stopping parameter:} We give neither a limit to time nor to the number of iterations and use only 
the previously mentioned stopping criteria where the $\varepsilon_{stop}$ for ADF is always $1/3$ the one 
for ALS.
The different choices for $\varepsilon_{stop}$ are to compensate for the differing per-iteration computational 
complexity of each algorithm (and lead to a fair comparison). 

\textit{Order of optimization:} Furthermore we use a slightly different order of optimization as previously discussed. Instead of the sweep we gave before ($s = 1,\ldots,d$), we
alternate between two sweeps ($s = 1,\ldots,h, \quad s = d,\ldots,h, \quad h = \lfloor d/2 \rfloor$) to enhance symmetry. A full alternating sweep
($s = 1,\ldots,d, \quad s = d,\ldots,1$) can also be considered. However, we found that this sweep is slightly less effective.

\textit{Notation:} For the results of the tests we denote the ratio of known points $\rho=\#P/n^d$, the relative residual $res_P=\|A-X\|_P / \|A \|_P$, the error on the
control set $res_C=\|A-X\|_C / \|A\|_C$ and the $time$ in seconds. In order to save space, we 
sometimes label the y-axis above plots.

\subsection{Approximation of a Full Rank Tensor with Decaying Singular Values}\label{sec:invnormtensor}

As a first example, we consider a tensor $A\in\R^\Ind$ given by the entries
\begin{equation}
    \label{int} A_{(i_1,\ldots,i_d)} := \left(\sum_{\mu=1}^d i_\mu^2 \right)^{-1/2}.
\end{equation}

\begin{remark}[Approximation by exponential sums]\label{exp_sum}
  A good low-rank approximation of the aforementioned tensor $A$ (\ref{int}) can be obtained easily 
  from the following observation.  
  For any desired precision $\varepsilon\in(0,1)$ and $R>1$ there is a
  $k\in{\cal O}(\log(\varepsilon)\log(R))$ such that
  \begin{equation}\label{1sqrtr} 
    \forall r \in [1,R]:\quad
      \left\vert\frac{1}{\sqrt{r}} - \sum_{i=1}^k \omega_i^* e^{-\alpha_i^* r}\right\vert < \varepsilon,
  \end{equation}
  for specific values of $\omega^*, \alpha^*$ that depend on the desired accuracy $\varepsilon$ and
  upper bound $R$. The particular values can be obtained, cf. \cite{Ha12}, from the following webpage:
  \begin{center}
    \verb#http://www.mis.mpg.de/scicomp/EXP_SUM#
  \end{center}
  To transfer this observation to the multidimensional case, we insert $r = \|x\|^2, x \in \{1,\ldots,n\}^d$  
  and transform 
  \[ e^{-\|x\|^2} = e^{-\sum_{s=1}^d x_s^2} = \prod_{s=1}^d e^{-x_s^2}. \]
  Since, in this case, $r \in [d,dn^2]$, we rescale $\omega = \frac{1}{\sqrt{d}} \omega^*$ 
  and $\alpha = \frac{1}{d} \alpha^*$ as well as require that $R \ge n^2$. We finally obtain
  \[ A_{(i_1,\ldots,i_d)} 
  = \left(\sum_{s=1}^d i_s^2 \right)^{-1/2} \approx \sum_{\ell=1}^k \omega_{\ell} \prod_{s=1}^d e^{-\alpha_{\ell} i_s^2}. \]
  This yields a TT format respresentation $A=A^G$ with square diagonal matrices  
  \[ 
  (G_1(m))_{1,i} = \omega_i e^{-\alpha_i m^2},\quad
  (G_s(m))_{i,i} = e^{-\alpha_i m^2},\quad
  (G_d(m))_{i,1} = e^{-\alpha_i m^2},\]
  for $i = 1,\ldots,k$, $s = 2,\ldots,d-1$, $m = 1,\ldots,n$,
  of rank {\boldmath$r$} $= (k,\ldots,k)$ with a maximal pointwise error of $\frac{\varepsilon}{\sqrt{d}}$. 
  A rank $k$ approximation obtained in this way is not optimal in the sense that the same accuracy can be 
  reached with a smaller rank. In order to find the near best approximation, we make use of the 
  hierarchcial SVD (cf. \cite{Gr10}): In the first step we compute a highly accurate large rank
  tensor $\hat{A}\in TT(${\boldmath$\hat{r}$}$)$, in the second step we determine the quasi-optimal 
  approximation $A\in TT(${\boldmath$r$}$)$, 
  $\|A-\hat{A}\|\le \sqrt{d-1}\inf_{B\in TT(\mbox{\boldmath\scriptsize$r$})} \|B-\hat{A}\|$, cf. \cite{Os11,Gr10},
  by truncation of $\hat{A}$ to rank {\boldmath$r$} via the hierarchical SVD.
\end{remark}

We give convergence plots for varying target rank and slice density and also carry out four detailed, different tests, each one focusing on a different parameter: 
$d$ (dimension), $r$ (final rank), $n$ (size) and $C_{SD}$ (slice density).
In these tests we also compare the ADF with the ALS algorithm with stopping parameter 
$\varepsilon_{stop}=5\times 10^{-5}$ (ADF), and 
$\varepsilon_{stop}=15\times 10^{-5}$ (ALS).
 
Each combination of parameters is tested $20$ times for different random $P$ and $C$, where the same random instance of these parameters $P$ and $C$ is used in both ALS and ADF tests. Furthermore $\mean{res_C}$ and $\mean{res_P}$ denote the geometric mean of the respective results and 
$\mean{time}$ the arithmetic mean of times. The values in brackets give the geometric variance, respectively in case of the time the arithmetic variance.
A plot of the convergence of $\mean{res_P},\mean{res_C}$ for fixed $d=7$ , $n=12$ and varying target rank as well as slice density is given in Figure \ref{epsplot}.
\begin{figure}[htb]
  \begin{center}
    \begin{minipage}{\linewidth}
      \newlength\figureheight
      \newlength\figurewidth
    \setlength\figureheight{5cm}
    \setlength\figurewidth{0.9\linewidth}
%
%
\begin{tikzpicture}

\begin{axis}[%
width=0.410625\figurewidth,
height=\figureheight,
at={(0.540296\figurewidth,0\figureheight)},
scale only axis,
xmin=0,
xmax=0.0032,
xlabel={sampling ratio},
ymode=log,
ymin=1e-08,
ymax=0.01,
yminorticks=true,
title={relative residual on P}
]
\addplot [color=blue,dashed,mark size=3.5pt,mark=+,mark options={solid},forget plot]
  table[row sep=crcr]{%
2.81e-05	0.00258\\
6.33e-05	0.000173\\
0.000113	1.68e-05\\
0.000176	2.21e-06\\
0.000253	4.06e-07\\
0.000345	1.05e-07\\
0.00045	3.08e-08\\
};
\addplot [color=red,dashed,mark size=3.5pt,mark=o,mark options={solid},forget plot]
  table[row sep=crcr]{%
9.38e-05	0.00406\\
0.000211	0.000377\\
0.000375	3.64e-05\\
0.000586	4.29e-06\\
0.000844	7.86e-07\\
0.00115	1.51e-07\\
0.0015	3.43e-08\\
};
\addplot [color=black,dashed,mark size=3.5pt,mark=asterisk,mark options={solid},forget plot]
  table[row sep=crcr]{%
0.000188	0.00512\\
0.000422	0.00057\\
0.00075	5.2e-05\\
0.00117	7.54e-06\\
0.00169	1.04e-06\\
0.0023	2.13e-07\\
0.003	4.76e-08\\
};
\addplot [color=blue,solid,mark size=3.5pt,mark=+,mark options={solid},forget plot]
  table[row sep=crcr]{%
2.81e-05	0.00258\\
6.33e-05	0.000171\\
0.000113	1.44e-05\\
0.000176	2.19e-06\\
0.000253	5.57e-07\\
0.000345	1.89e-07\\
0.00045	7.58e-08\\
};
\addplot [color=red,solid,mark size=3.5pt,mark=o,mark options={solid},forget plot]
  table[row sep=crcr]{%
9.38e-05	0.00406\\
0.000211	0.000376\\
0.000375	3.6e-05\\
0.000586	4.06e-06\\
0.000844	8.66e-07\\
0.00115	2.11e-07\\
0.0015	8.11e-08\\
};
\addplot [color=black,solid,mark size=3.5pt,mark=asterisk,mark options={solid},forget plot]
  table[row sep=crcr]{%
0.000188	0.00512\\
0.000422	0.00057\\
0.00075	5.19e-05\\
0.00117	7.15e-06\\
0.00169	1.09e-06\\
0.0023	2.66e-07\\
0.003	8.17e-08\\
};
\end{axis}

\begin{axis}[%
width=0.410625\figurewidth,
height=\figureheight,
at={(0\figurewidth,0\figureheight)},
scale only axis,
xmin=0,
xmax=0.0032,
xlabel={sampling ratio},
ymode=log,
ymin=2e-05,
ymax=0.01,
yminorticks=true,
title={relative residual on C},
legend style={legend cell align=left,align=left,draw=white!15!black,font=\footnotesize}
]
\addplot [color=blue,dashed,mark size=3.5pt,mark=+,mark options={solid}]
  table[row sep=crcr]{%
2.81e-05	0.00827\\
6.33e-05	0.00282\\
0.000113	0.00139\\
0.000176	0.00121\\
0.000253	0.00058\\
0.000345	0.000653\\
0.00045	0.000239\\
};
\addlegendentry{$\text{C}_{\text{SD}}\text{=3 (ALS)}$};

\addplot [color=red,dashed,mark size=3.5pt,mark=o,mark options={solid}]
  table[row sep=crcr]{%
9.38e-05	0.00814\\
0.000211	0.00191\\
0.000375	0.000583\\
0.000586	0.000459\\
0.000844	0.000305\\
0.00115	0.000158\\
0.0015	0.000177\\
};
\addlegendentry{$\text{C}_{\text{SD}}\text{=10 (ALS)}$};

\addplot [color=black,dashed,mark size=3.5pt,mark=asterisk,mark options={solid}]
  table[row sep=crcr]{%
0.000188	0.00925\\
0.000422	0.002\\
0.00075	0.000623\\
0.00117	0.000266\\
0.00169	0.000121\\
0.0023	9.56e-05\\
0.003	8.4e-05\\
};
\addlegendentry{$\text{C}_{\text{SD}}\text{=20 (ALS)}$};

\addplot [color=blue,solid,mark size=3.5pt,mark=+,mark options={solid}]
  table[row sep=crcr]{%
2.81e-05	0.00825\\
6.33e-05	0.00281\\
0.000113	0.00118\\
0.000176	0.00115\\
0.000253	0.000568\\
0.000345	0.000651\\
0.00045	0.000236\\
};
\addlegendentry{$\text{C}_{\text{SD}}\text{=3 (ADF)}$};

\addplot [color=red,solid,mark size=3.5pt,mark=o,mark options={solid}]
  table[row sep=crcr]{%
9.38e-05	0.00814\\
0.000211	0.0019\\
0.000375	0.000571\\
0.000586	0.000499\\
0.000844	0.000291\\
0.00115	0.00015\\
0.0015	0.000215\\
};
\addlegendentry{$\text{C}_{\text{SD}}\text{=10 (ADF)}$};

\addplot [color=black,solid,mark size=3.5pt,mark=asterisk,mark options={solid}]
  table[row sep=crcr]{%
0.000188	0.00925\\
0.000422	0.002\\
0.00075	0.000621\\
0.00117	0.000266\\
0.00169	0.000137\\
0.0023	9.6e-05\\
0.003	8.44e-05\\
};
\addlegendentry{$\text{C}_{\text{SD}}\text{=20 (ADF)}$};

\end{axis}
\end{tikzpicture}%
    \end{minipage}
  \end{center}
\caption{\label{epsplot} ($d=7$, $r=2,\ldots,8$, $n=12$, $C_{SD} = 3,10,20$) Plotted are the residuals $\mean{res_P}$ (right) as well as the
   control residuals $\mean{res_C}$ (left) as function of the sampling ratio 
   $\rho = d n r^2 C_{SD}/n^d$ for varying target ranks $r = 2,\ldots,8$ indicated by the respective symbols. 
   Each curve corresponds to one choice of the slice density $C_{SD}$, for either ALS (dashed) or ADF (continous).}
\end{figure}
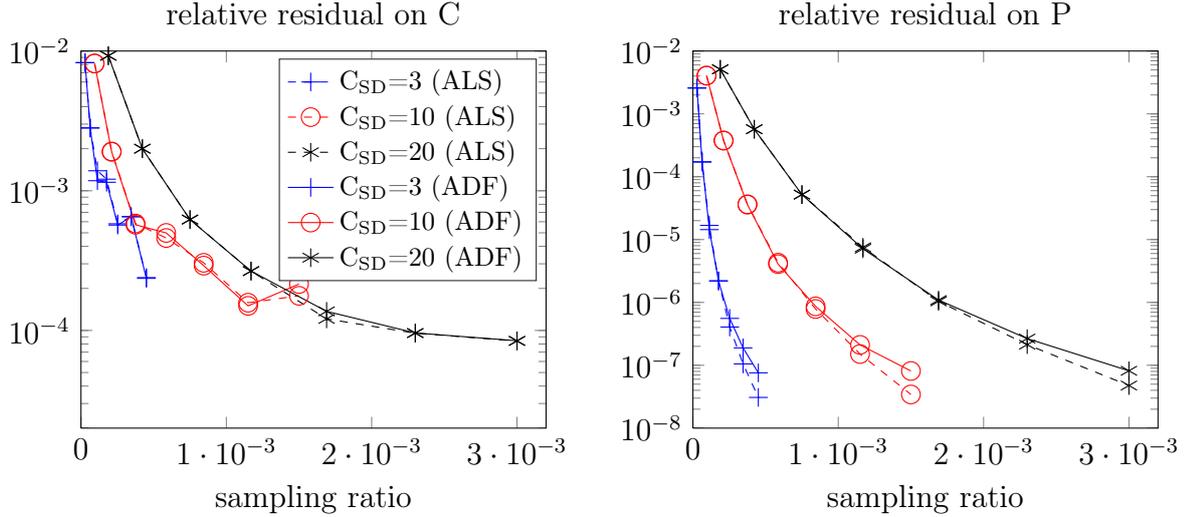
We observe convergence for all choices of parameters. In the Tables \ref{dtest}, \ref{rtest}, \ref{ntest} and \ref{ctest} we list the detailed 
results of the four mentioned comparisons (left: ALS, right: ADF).

First, we consider the variation of the dimension $d \in \{5,6,7,8,13,21,34,55\}$ in Table \ref{dtest}.
For all dimensions $d=5,\ldots,55$ the approximation seems to be uniformly good and the variance with 
respect to the randomness in the sampling points seems to be quite low. 
\begin{table}[htb]
\centering
\resizebox{\columnwidth}{!}{%
\begin{tabular}{|c|c|c|c||c|c|c|} 
\hline 
\multicolumn{7}{|c|}{$d$ varying, $r = 3$, $n = 8$, $C_{SD} =10$}\\ 
\hline 
& \multicolumn{3}{|c||}{ALS} & \multicolumn{3}{|c|}{ADF} \\ 
\hline  
$d$ & $\mean{res_C}$ & $\mean{res_P}$ & $\mean{time}$ & $\mean{res_C}$ & $\mean{res_P}$ & $\mean{time}$ \\ 
\hline \hline 
  5 & 2.9e-03(1.6) & 9.6e-04(1.2) & 0.1(0.0) & 2.9e-03(1.6) & 9.6e-04(1.2) & 0.1(0.0) \\ 
\hline \hline 
  6 & 2.2e-03(1.8) & 4.9e-04(1.3) & 0.2(0.0) & 2.2e-03(1.8) & 4.9e-04(1.3) & 0.1(0.0) \\ 
\hline \hline 
  7 & 1.2e-03(1.8) & 3.1e-04(1.2) & 0.3(0.1) & 1.2e-03(1.8) & 3.1e-04(1.2) & 0.2(0.1) \\ 
\hline \hline 
  8 & 1.2e-03(2.0) & 1.7e-04(1.2) & 0.4(0.1) & 1.2e-03(2.0) & 1.7e-04(1.2) & 0.3(0.1) \\ 
\hline \hline 
 13 & 1.8e-04(1.8) & 3.5e-05(1.1) & 1.7(0.4) & 1.8e-04(1.8) & 3.5e-05(1.1) & 1.0(0.2) \\ 
\hline \hline 
 21 & 3.7e-05(1.6) & 7.5e-06(1.2) & 7.4(3.2) & 3.7e-05(1.6) & 7.4e-06(1.2) & 4.0(1.9) \\ 
\hline \hline 
 34 & 7.5e-06(1.6) & 1.7e-06(1.1) & 24.9(8.3) & 7.4e-06(1.6) & 1.7e-06(1.1) & 14.0(4.3) \\ 
\hline \hline 
 55 & 1.4e-06(1.5) & 3.7e-07(1.1) & 71.2(27.7) & 1.4e-06(1.5) & 3.7e-07(1.1) & 42.5(17.6) \\ 
\hline 
\end{tabular} 
}
\vspace{0.1cm}
\caption{\label{dtest} Convergence and timing with respect to the dimension $d$ for otherwise fixed parameters.}
\end{table}

\begin{remark}(Comparison with HTOpt) 
  As a comparison of our results with the HTOpt algorithm from \cite{dSiHe13,dSiHe14} we perform the first 
  three test of Table \ref{dtest}, i.e. dimension $d\in\{5,6,7\}$, $r=3$, $n=8$, $\csd=10$. We have 
  used the default values provided by the program but set the maximal number of iterations to $1000$.
  The following table shows the approximation quality on the control set $C$ and given point set
  $P$, the accuracy of the near best exponential sum approximation ($res_{\rm exp}$) from Remark 
  \ref{exp_sum}, and the number of iterative steps:
  \begin{center}
    \begin{tabular}{|l|cc|c|r|}
      \hline
      $d$ & $\mean{res_C}$ & $\mean{res_P}$ & $res_{\rm exp}$ & steps\\
      \hline
      5 & 6.9e-03 & 2.4e-03 & 2.3e-03 & 519 \\
      6 & 2.7e-02 & 2.6e-03 & 1.5e-03 & 750\\
      7 & 5.2e-02 & 4.8e-03 & 1.0e-03   & 579\\
      \hline
    \end{tabular}
  \end{center}
  We can clearly see that the number of iterations in HTOpt used to find the approximation is rather stable. 
  We have used the dense linear algebra version provided in MATLAB, but a sparse version is also
  available. It seems that for smaller dimension the optimization on the manifold yields an approximation 
  close to the best one, whereas for larger dimension $d$ the quality diminishes.
\end{remark}

In the second experiment we vary the target ranks $r\in\{2,\ldots,8\}$ and report the results in 
Table \ref{rtest}. The approximation quality on the reference set $P$ is as we expected (exponentially
decaying to zero), but on the control set $C$ the fixed slice density $\csd$ limits the 
accuracy that we can achieve by the random sampling.
\begin{table}[htb]
\centering
\resizebox{\columnwidth}{!}{%
\begin{tabular}{|c|c|c|c||c|c|c|} 
\hline 
\multicolumn{7}{|c|}{$d = 7$, $r$ varying, $n = 12$, $C_{SD} =10$}\\ 
\hline 
& \multicolumn{3}{|c||}{ALS} & \multicolumn{3}{|c|}{ADF} \\ 
\hline  
$r$ & $\mean{res_C}$ & $\mean{res_P}$ & $\mean{time}$ & $\mean{res_C}$ & $\mean{res_P}$ & $\mean{time}$ \\ 
\hline \hline 
  2 & 8.1e-03(1.3) & 4.1e-03(1.2) & 0.1(0.0) & 8.1e-03(1.3) & 4.1e-03(1.2) & 0.1(0.0) \\ 
\hline \hline 
  3 & 1.9e-03(1.6) & 3.8e-04(1.1) & 0.6(0.1) & 1.9e-03(1.6) & 3.8e-04(1.1) & 0.4(0.1) \\ 
\hline \hline 
  4 & 5.8e-04(2.4) & 3.6e-05(1.2) & 9.0(2.7) & 5.7e-04(2.4) & 3.6e-05(1.2) & 4.9(1.1) \\ 
\hline \hline 
  5 & 4.6e-04(2.8) & 4.3e-06(1.2) & 80.4(27.3) & 5.0e-04(2.6) & 4.1e-06(1.2) & 50.1(17.6) \\ 
\hline \hline 
  6 & 3.0e-04(2.3) & 7.9e-07(1.4) & 260.6(72.2) & 2.9e-04(2.4) & 8.7e-07(1.2) & 131.3(29.9) \\ 
\hline \hline 
  7 & 1.6e-04(2.4) & 1.5e-07(1.3) & 761.9(124.4) & 1.5e-04(2.5) & 2.1e-07(1.2) & 284.1(44.9) \\ 
\hline \hline 
  8 & 1.8e-04(2.5) & 3.4e-08(1.4) & 1964.9(309.3) & 2.1e-04(2.4) & 8.1e-08(1.2) & 555.2(68.2) \\ 
\hline 
\end{tabular} 
}
\vspace{0.1cm}
\caption{\label{rtest} Convergence and timing with respect to the target rank $r=r_{final}$ for otherwise fixed parameters.}
\end{table}

In our third experiment we consider the variation of mode sizes $n \in \{6,12,24,48\}$. The results 
are given in Table \ref{ntest}. We observe a rather slow increase of the error which can be attributed 
to the random sampling.
\begin{table}[htb]
\centering
\resizebox{\columnwidth}{!}{%
\begin{tabular}{|c|c|c|c||c|c|c|} 
\hline 
\multicolumn{7}{|c|}{$d = 7$, $r = 3$ , $n$ varying, $C_{SD} =10$}\\ 
\hline 
& \multicolumn{3}{|c||}{ALS} & \multicolumn{3}{|c|}{ADF} \\ 
\hline  
$n$ & $\mean{res_C}$ & $\mean{res_P}$ & $\mean{time}$ & $\mean{res_C}$ & $\mean{res_P}$ & $\mean{time}$ \\ 
\hline \hline 
  6 & 9.2e-04(2.1) & 2.5e-04(1.2) & 0.2(0.1) & 9.2e-04(2.1) & 2.5e-04(1.2) & 0.1(0.1) \\ 
\hline \hline 
 12 & 1.9e-03(1.6) & 3.8e-04(1.1) & 0.6(0.1) & 1.9e-03(1.6) & 3.8e-04(1.1) & 0.4(0.1) \\ 
\hline \hline 
 24 & 3.4e-03(1.5) & 4.4e-04(1.1) & 1.5(0.4) & 3.4e-03(1.5) & 4.4e-04(1.1) & 1.0(0.3) \\ 
\hline \hline 
 48 & 3.9e-03(1.5) & 5.5e-04(1.1) & 4.5(1.3) & 3.9e-03(1.5) & 5.5e-04(1.1) & 3.3(1.0) \\ 
\hline 
\end{tabular} 
}
\vspace{0.1cm}
\caption{\label{ntest} Convergence and timing with respect to the mode sizes $n$ for otherwise fixed parameters.}
\end{table}

In our fourth and last experiment we vary the slice density $C_{SD} \in \{1,3,10,20,50\}$. 
The near best 
approximation, for $d = 7,\ n = 12,\ r = 3$, is obtained as in Remark \ref{exp_sum}. 
Its relative residual is $res_{\rm exp} = 1.34\cdot10^{-3}$.
The results in Table \ref{ctest} show that for $\csd\to\infty$, i.e. sampling more and more 
entries of the tensor, the reconstruction gets closer and closer to the best rank $r=3$ 
approximation of the tensor. Reasonably good results are already obtained for $\csd=3$. Note that the relative
residual on the sampling set is smaller than the optimal residual (due to overfitting).
\begin{table}[htb]
\centering
\resizebox{\columnwidth}{!}{%
\begin{tabular}{|c|c|c|c||c|c|c|} 
\hline 
\multicolumn{7}{|c|}{$d = 7 $ , $r = 3$, $n = 12$, $C_{SD}$ varying}\\ 
\hline 
& \multicolumn{3}{|c||}{ALS} & \multicolumn{3}{|c|}{ADF} \\ 
\hline  
$c$ & $\mean{res_C}$ & $\mean{res_P}$ & $\mean{time}$ & $\mean{res_C}$ & $\mean{res_P}$ & $\mean{time}$ \\ 
\hline \hline 
  1 & 4.7e-03(1.8) & 4.2e-05(1.3) & 1.8(0.4) & 4.2e-03(1.9) & 3.4e-05(1.3) & 1.6(0.3) \\ 
\hline \hline 
  3 & 2.8e-03(1.7) & 1.7e-04(1.2) & 0.7(0.2) & 2.8e-03(1.7) & 1.7e-04(1.2) & 0.4(0.1) \\ 
\hline \hline 
 10 & 1.9e-03(1.6) & 3.8e-04(1.1) & 0.6(0.1) & 1.9e-03(1.6) & 3.8e-04(1.1) & 0.4(0.1) \\ 
\hline \hline 
 20 & 2.0e-03(1.7) & 5.7e-04(1.2) & 0.8(0.2) & 2.0e-03(1.7) & 5.7e-04(1.2) & 0.5(0.1) \\ 
\hline \hline 
 50 & 1.4e-03(1.5) & 7.2e-04(1.1) & 1.1(0.1) & 1.4e-03(1.5) & 7.2e-04(1.1) & 0.8(0.1) \\ 
\hline 
\end{tabular} 
}
\vspace{0.1cm}
\caption{\label{ctest} Convergence and timing with respect to the slice density $C_{SD}$ for otherwise fixed parameters.}
\end{table}

The tensor $A$ from (\ref{int}) is not suitable for a high-dimensional high rank tensor completion
based on random samples, because the singular behavior is localized in one of the corners of the 
hypercube $[0,R]^d$. In order to better investigate the approximation quality of ALS and ADF, we 
consider the tensor $D \in \R^{\Ind}$ given by the entries
\[ D_{(i_1,\ldots,i_d)} := \left( 1 + \sum_{\mu = 1}^{d-1} \frac{i_{\mu}}{i_{\mu+1}} \right)^{-1}. \]
For all examples, we choose $d=7$, $n=15$, $C_{SD}=10$, $\varepsilon_{stop}=5\times 10^{-4}$ (ADF)
and $\varepsilon_{stop}=15\times 10^{-4}$ (ALS). 
\begin{figure}[htb]
  \begin{center}
    \begin{minipage}{\linewidth}
      \hspace*{-0.7cm}   
      
      \setlength\figureheight{4cm}
      \setlength\figurewidth{0.9\linewidth}
      \input{tgt2.tikz}
    \end{minipage}
  \end{center}
  \caption{\label{tgt2} ($d=7$, $r=6,8,10$, $n=15$, $C_{SD} = 10$) Plotted are, 
    for varying target ranks $r_{final} = 6,8,10$, the residual ${res_P}$ (dashed) as well as the
    control residual ${res_C}$ (continous) as functions of the total time (in seconds)
    for one trial, for ALS (black, upper curves) and ADF (blue, lower curves).}
\end{figure}
\begin{figure}[htb]
  \begin{center}
    \begin{minipage}{\linewidth}
      \hspace*{-0.7cm}   
      
      \setlength\figureheight{4cm}
      \setlength\figurewidth{0.9\linewidth}
      \input{tgt2_lr.tikz}
    \end{minipage}
  \end{center}
  \caption{\label{tgt2_lr} ($d=7$, $r=6,8,10$, $n=15$, $C_{SD} = 10$) Plotted are, 
    for varying target ranks $r_{final} = 6,8,10$, the residual ${res_P}$ (dashed) as well as the
    control residual ${res_C}$ (continous) as functions of the relative time $t_{rel}$ for one 
    trial, for ALS (black, upper curves) and ADF (blue, lower curves).    
    Both methods start with the same initial guess of rank $r_{final}-1$ obtained by ALS. 
  }
\end{figure}
In our first experiment in Figure \ref{tgt2} we compare the runtime for ALS and ADF to reach a target accuracy for a 
rank $r_{final}\in\{6,8,10\}$ approximation. 
In our second experiment in Figure \ref{tgt2_lr} we repeat the experiment from Figure \ref{tgt2} and try to 
exclude any 
effects due to different choices of stopping parameters or initial guesses. For this, we start both iterations 
with the same initial guess of rank $r_{final}-1$ obtained from ALS.
Instead of the total time $T$ we measure the relative time $t_{rel} := (T - T_1)/T_1$ with respect to the 
runtime $T_1$ for rank $r_{final}-1$ ALS. 
We observe that
the ADF algorithm is consistently faster than ALS. The reason for this is that both iterations
require a similar number of steps, but the complexity per step of ALS is inferior to that of ADF,
cf. Lemma \ref{re:comp} and Lemma \ref{pi}. These observations are highlighted 
in Figure \ref{time_to_iter}, where we display the average number of iterations required until 
the next rank increase and the average measured time per step for each rank (for $d=7$, $n=15$,
$r_{final}=14$, $C_{SD}=10$) of both ALS and ADF. 
\begin{figure}[htb]
  \begin{center}
    \begin{minipage}{\linewidth}
      \hspace*{-0.7cm}   
      
      \setlength\figureheight{4cm}
      \setlength\figurewidth{0.9\linewidth}
%
%
\begin{tikzpicture}

\begin{axis}[%
width=0.410625\figurewidth,
height=\figureheight,
at={(0.540296\figurewidth,0\figureheight)},
scale only axis,
xmin=0,
xmax=16,
xtick={ 1,  2,  3,  4,  5,  6,  7,  8,  9, 10, 11, 12, 13, 14, 15, 16, 17, 18, 19, 20},
xlabel={rank},
ymin=0,
ymax=50,
ylabel={average time per rank},
axis x line*=bottom,
axis y line*=left,
legend style={at={(0.03,0.97)},anchor=north west,legend cell align=left,align=left,draw=white!15!black}
]
\addplot [color=black,solid,line width=\plotlinewidth]
  table[row sep=crcr]{%
1	0.058403903325\\
2	0.0974393761714286\\
3	0.184159946214286\\
4	0.356111788380952\\
5	0.673505492530303\\
6	1.169333995\\
7	1.94173424579365\\
8	3.0960180749495\\
9	4.85229529808081\\
10	7.45036066441337\\
11	10.9792006439394\\
12	15.2707666922378\\
13	20.7528061561538\\
14	27.9652812103209\\
};
\addlegendentry{ALS};

\addplot [color=red,dotted,line width=\plotlinewidth]
  table[row sep=crcr]{%
1	0.0336535661744247\\
2	0.127612839030207\\
3	0.216321172547889\\
4	0.359281659022471\\
5	0.630070677593592\\
6	1.11633789424553\\
7	1.9198062618072\\
8	3.15627201995216\\
9	4.95560469519859\\
10	7.46174710090934\\
11	10.8327153372919\\
12	15.2405987913983\\
13	20.8715601371253\\
14	27.9258353352144\\
15	36.6177336332515\\
16	47.1756375656673\\
};
\addlegendentry{ALS polyfit degree 4};

\addplot [color=blue,solid,line width=\plotlinewidth]
  table[row sep=crcr]{%
1	0.060183066125\\
2	0.0747501969571428\\
3	0.0963586396428572\\
4	0.123941996176768\\
5	0.163751101409091\\
6	0.197180046142857\\
7	0.238415257373737\\
8	0.28222089041958\\
9	0.33072998974359\\
10	0.380704936615385\\
11	0.440565424384615\\
12	0.497381747027864\\
13	0.562478399867511\\
14	0.640913653205098\\
};
\addlegendentry{ADF};

\addplot [color=red,dotted,line width=\plotlinewidth]
  table[row sep=crcr]{%
1	0.0570257436373037\\
2	0.0764314910588927\\
3	0.100027600894779\\
4	0.127814073144963\\
5	0.159790907809444\\
6	0.195958104888223\\
7	0.236315664381298\\
8	0.280863586288672\\
9	0.329601870610342\\
10	0.38253051734631\\
11	0.439649526496576\\
12	0.500958898061138\\
13	0.566458632039998\\
14	0.636148728433156\\
15	0.710029187240611\\
16	0.788100008462363\\
};
\addlegendentry{ADF polyfit degree 2};

\end{axis}

\begin{axis}[%
width=0.410625\figurewidth,
height=\figureheight,
at={(0\figurewidth,0\figureheight)},
scale only axis,
xmin=0,
xmax=15,
xtick={ 1,  2,  3,  4,  5,  6,  7,  8,  9, 10, 11, 12, 13, 14, 15, 16, 17, 18, 19, 20},
xlabel={rank},
ymin=0,
ymax=25,
ylabel={average iterations per rank},
axis x line*=bottom,
axis y line*=left,
legend style={at={(0.03,0.97)},anchor=north west,legend cell align=left,align=left,draw=white!15!black}
]
\addplot [color=black,solid,line width=\plotlinewidth]
  table[row sep=crcr]{%
1	2\\
2	4\\
3	4\\
4	4.75\\
5	5.65\\
6	3\\
7	4.45\\
8	5.8\\
9	5.8\\
10	5.8\\
11	5.25\\
12	7.1\\
13	7.45\\
14	11.65\\
};
\addlegendentry{ALS};

\addplot [color=blue,solid,line width=\plotlinewidth]
  table[row sep=crcr]{%
1	2\\
2	4\\
3	4\\
4	5.15\\
5	6\\
6	4\\
7	5.05\\
8	6.4\\
9	7.3\\
10	7.2\\
11	7\\
12	9.4\\
13	14.35\\
14	23.95\\
};
\addlegendentry{ADF};

\end{axis}
\end{tikzpicture}%
    \end{minipage}
  \end{center}
  \caption{\label{time_to_iter} ($d = 7, n = 15, r_{final} = 14, C_{SD} = 10$) 
    Plotted are the average number of iterations 
    required until the next rank increase (left) and the average measured time per step for each rank (right) 
    for ALS (black) and ADF (blue).}
\end{figure}
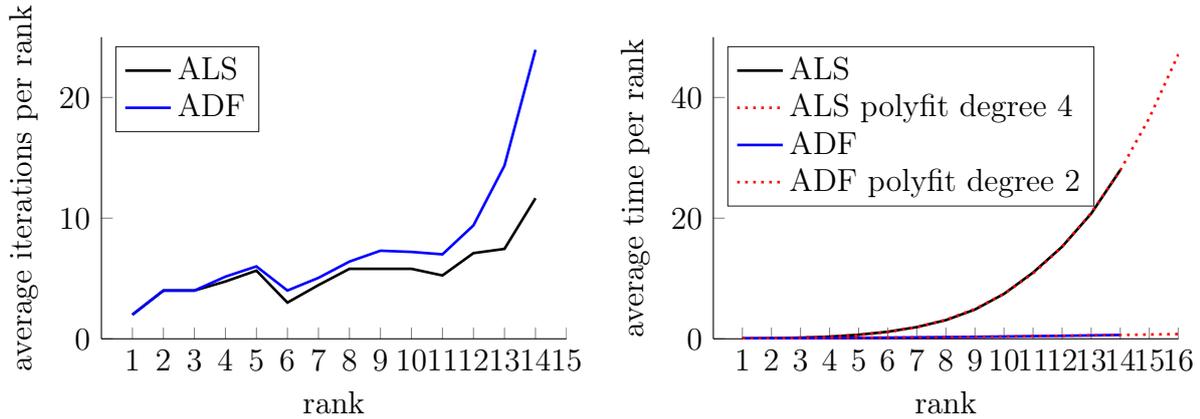
The detailed timing results of the experiments are given in Table \ref{rtest_AOG}, averaged over 
$20$ trials for each $r \in \{4,6,\ldots,14\}$.
\begin{table}[htb]
\centering
\resizebox{\columnwidth}{!}{%
\begin{tabular}{|c|c|c|c||c|c|c|}
\hline
\multicolumn{7}{|c|}{$d = 7$, $r$ varying, $n = 15$, $C_{SD} =10$}\\
\hline
& \multicolumn{3}{|c||}{ALS} & \multicolumn{3}{|c|}{ADF} \\
\hline
$r$ & $\mean{res_C}$ & $\mean{res_P}$ & $\mean{time}$ & $\mean{res_C}$ &
$\mean{res_P}$ & $\mean{time}$ \\
\hline \hline 
  4 & 3.5e-03(1.0) & 3.1e-03(1.0) & 0.6(0.2) & 3.5e-03(1.0) & 3.1e-03(1.0) & 0.3(0.1) \\ 
\hline \hline 
  6 & 9.6e-04(1.0) & 8.1e-04(1.0) & 6.8(2.4) & 9.6e-04(1.0) & 8.1e-04(1.0) & 1.4(0.0) \\ 
\hline \hline 
  8 & 1.9e-04(1.0) & 1.4e-04(1.0) & 64.7(2.5) & 1.9e-04(1.1) & 1.4e-04(1.0) & 5.2(0.2) \\ 
\hline \hline 
 10 & 6.3e-05(1.0) & 4.9e-05(1.0) & 133.4(5.0) & 6.3e-05(1.0) & 4.9e-05(1.0) & 15.2(0.5) \\ 
\hline \hline 
 12 & 2.4e-05(1.1) & 1.5e-05(1.0) & 466.7(15.8) & 2.4e-05(1.1) & 1.5e-05(1.0) & 34.9(0.8) \\ 
\hline \hline 
 14 & 7.8e-06(1.1) & 4.6e-06(1.0) & 1700.0(112.5) & 7.9e-06(1.1) & 4.6e-06(1.0) & 94.6(5.0) \\ 
\hline
\end{tabular}
}
\vspace{0.1cm}
\caption{\label{rtest_AOG} Convergence and timing with respect to the target rank $r=r_{final}$ for otherwise fixed parameters.}
\end{table}

\subsection{Reconstruction of a Low Rank Tensor without Noise}\label{sec:numrec}

As second group of examples, we consider quasi-random tensors with exact, common low 
TT ranks $A \in TT(r,\ldots,r)$ (cf. Definition \ref{TT tensor format}). Each quasi-random 
tensor is generated via a TT representation $A=A^G$ where we assign to each entry of each
block $G_1,\ldots,G_d$ a uniformly distributed random value in $[-0.5,0.5]$.
Each combination of parameters is tested $20$ times for different random $P$ and $C$
and stopping parameter 
$\varepsilon_{stop}:=5\times 10^{-4}$ (ADF) and
$\varepsilon_{stop}:=15\times 10^{-4}$ (ALS).
We consider 
such a reconstruction successful if $res_C < 10^{-6}$. 
First, we do not change the quasi-random tensor. In the test afterwards, we manipulate the singular 
values of the original quasi-random tensor.

\subsubsection{Quasi-random Tensors}\label{sec:num_rec1}

In the first test we consider the reconstruction of quasi-random tensors as described
above. Since the rank is exactly $r=1,\ldots,8$ it would in principle be possible to 
find a tensor of exactly rank $r$ that interpolates the sampled points. However, due to 
the nature of the random sampling and possible local minima we do not always reconstruct 
the tensor. 
The number of successful reconstructions for $20$ random tensors is displayed in $20$ 
shades of gray, from white $(0)$ to black $(\mbox{all } 20)$.
In Figure \ref{recplotd4} for $d=4$ and $d=5$ we observe 
that both ALS and ADF are able to reconstruct the tensor (with known target rank $r$) provided
that the slice density is high enough. For larger ranks $r$ it seems that a slice 
density of $\csd=4$ is enough, but for smaller ranks the slice density has to be larger in 
order to compensate for the randomness in both the tensor as well as the sampling set $P$.
\begin{figure}[htb]
  \begin{center}
    \begin{minipage}{\linewidth}

    \setlength\figureheight{2.5cm}
    \setlength\figurewidth{0.9\linewidth}
%
%
\definecolor{mycolor1}{rgb}{0.50000,0.50000,0.80000}%
\begin{tikzpicture}

\begin{axis}[%
width=0.192207\figurewidth,
height=\figureheight,
at={(0.758713\figurewidth,0\figureheight)},
scale only axis,
axis on top,
xmin=0.5,
xmax=8.5,
xtick={1,2,3,4,5,6,7,8},
xlabel={rank r},
ymin=0.5,
ymax=8.5,
ytick={1,2,3,4,5,6,7,8},
yticklabels={{2},{4},{8},{16},{32},{64},{128},{256}},
title={ADF (d = 5)}
]
\addplot [forget plot] graphics [xmin=0.5,xmax=8.5,ymin=0.5,ymax=8.5] {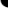};
\addplot [color=mycolor1,solid,forget plot]
  table[row sep=crcr]{%
1.5	0.5\\
1.5	8.5\\
};
\addplot [color=mycolor1,solid,forget plot]
  table[row sep=crcr]{%
0.5	1.5\\
8.5	1.5\\
};
\addplot [color=mycolor1,solid,forget plot]
  table[row sep=crcr]{%
2.5	0.5\\
2.5	8.5\\
};
\addplot [color=mycolor1,solid,forget plot]
  table[row sep=crcr]{%
0.5	2.5\\
8.5	2.5\\
};
\addplot [color=mycolor1,solid,forget plot]
  table[row sep=crcr]{%
3.5	0.5\\
3.5	8.5\\
};
\addplot [color=mycolor1,solid,forget plot]
  table[row sep=crcr]{%
0.5	3.5\\
8.5	3.5\\
};
\addplot [color=mycolor1,solid,forget plot]
  table[row sep=crcr]{%
4.5	0.5\\
4.5	8.5\\
};
\addplot [color=mycolor1,solid,forget plot]
  table[row sep=crcr]{%
0.5	4.5\\
8.5	4.5\\
};
\addplot [color=mycolor1,solid,forget plot]
  table[row sep=crcr]{%
5.5	0.5\\
5.5	8.5\\
};
\addplot [color=mycolor1,solid,forget plot]
  table[row sep=crcr]{%
0.5	5.5\\
8.5	5.5\\
};
\addplot [color=mycolor1,solid,forget plot]
  table[row sep=crcr]{%
6.5	0.5\\
6.5	8.5\\
};
\addplot [color=mycolor1,solid,forget plot]
  table[row sep=crcr]{%
0.5	6.5\\
8.5	6.5\\
};
\addplot [color=mycolor1,solid,forget plot]
  table[row sep=crcr]{%
7.5	0.5\\
7.5	8.5\\
};
\addplot [color=mycolor1,solid,forget plot]
  table[row sep=crcr]{%
0.5	7.5\\
8.5	7.5\\
};
\end{axis}

\begin{axis}[%
width=0.192207\figurewidth,
height=\figureheight,
at={(0.505809\figurewidth,0\figureheight)},
scale only axis,
axis on top,
xmin=0.5,
xmax=8.5,
xtick={1,2,3,4,5,6,7,8},
xlabel={rank r},
ymin=0.5,
ymax=8.5,
ytick={1,2,3,4,5,6,7,8},
yticklabels={{2},{4},{8},{16},{32},{64},{128},{256}},
title={ALS (d = 5)}
]
\addplot [forget plot] graphics [xmin=0.5,xmax=8.5,ymin=0.5,ymax=8.5] {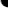};
\addplot [color=mycolor1,solid,forget plot]
  table[row sep=crcr]{%
1.5	0.5\\
1.5	8.5\\
};
\addplot [color=mycolor1,solid,forget plot]
  table[row sep=crcr]{%
0.5	1.5\\
8.5	1.5\\
};
\addplot [color=mycolor1,solid,forget plot]
  table[row sep=crcr]{%
2.5	0.5\\
2.5	8.5\\
};
\addplot [color=mycolor1,solid,forget plot]
  table[row sep=crcr]{%
0.5	2.5\\
8.5	2.5\\
};
\addplot [color=mycolor1,solid,forget plot]
  table[row sep=crcr]{%
3.5	0.5\\
3.5	8.5\\
};
\addplot [color=mycolor1,solid,forget plot]
  table[row sep=crcr]{%
0.5	3.5\\
8.5	3.5\\
};
\addplot [color=mycolor1,solid,forget plot]
  table[row sep=crcr]{%
4.5	0.5\\
4.5	8.5\\
};
\addplot [color=mycolor1,solid,forget plot]
  table[row sep=crcr]{%
0.5	4.5\\
8.5	4.5\\
};
\addplot [color=mycolor1,solid,forget plot]
  table[row sep=crcr]{%
5.5	0.5\\
5.5	8.5\\
};
\addplot [color=mycolor1,solid,forget plot]
  table[row sep=crcr]{%
0.5	5.5\\
8.5	5.5\\
};
\addplot [color=mycolor1,solid,forget plot]
  table[row sep=crcr]{%
6.5	0.5\\
6.5	8.5\\
};
\addplot [color=mycolor1,solid,forget plot]
  table[row sep=crcr]{%
0.5	6.5\\
8.5	6.5\\
};
\addplot [color=mycolor1,solid,forget plot]
  table[row sep=crcr]{%
7.5	0.5\\
7.5	8.5\\
};
\addplot [color=mycolor1,solid,forget plot]
  table[row sep=crcr]{%
0.5	7.5\\
8.5	7.5\\
};
\end{axis}

\begin{axis}[%
width=0.192207\figurewidth,
height=\figureheight,
at={(0.252904\figurewidth,0\figureheight)},
scale only axis,
axis on top,
xmin=0.5,
xmax=8.5,
xtick={1,2,3,4,5,6,7,8},
xlabel={rank r},
ymin=0.5,
ymax=8.5,
ytick={1,2,3,4,5,6,7,8},
yticklabels={{2},{4},{8},{16},{32},{64},{128},{256}},
title={ADF (d = 4)}
]
\addplot [forget plot] graphics [xmin=0.5,xmax=8.5,ymin=0.5,ymax=8.5] {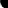};
\addplot [color=mycolor1,solid,forget plot]
  table[row sep=crcr]{%
1.5	0.5\\
1.5	8.5\\
};
\addplot [color=mycolor1,solid,forget plot]
  table[row sep=crcr]{%
0.5	1.5\\
8.5	1.5\\
};
\addplot [color=mycolor1,solid,forget plot]
  table[row sep=crcr]{%
2.5	0.5\\
2.5	8.5\\
};
\addplot [color=mycolor1,solid,forget plot]
  table[row sep=crcr]{%
0.5	2.5\\
8.5	2.5\\
};
\addplot [color=mycolor1,solid,forget plot]
  table[row sep=crcr]{%
3.5	0.5\\
3.5	8.5\\
};
\addplot [color=mycolor1,solid,forget plot]
  table[row sep=crcr]{%
0.5	3.5\\
8.5	3.5\\
};
\addplot [color=mycolor1,solid,forget plot]
  table[row sep=crcr]{%
4.5	0.5\\
4.5	8.5\\
};
\addplot [color=mycolor1,solid,forget plot]
  table[row sep=crcr]{%
0.5	4.5\\
8.5	4.5\\
};
\addplot [color=mycolor1,solid,forget plot]
  table[row sep=crcr]{%
5.5	0.5\\
5.5	8.5\\
};
\addplot [color=mycolor1,solid,forget plot]
  table[row sep=crcr]{%
0.5	5.5\\
8.5	5.5\\
};
\addplot [color=mycolor1,solid,forget plot]
  table[row sep=crcr]{%
6.5	0.5\\
6.5	8.5\\
};
\addplot [color=mycolor1,solid,forget plot]
  table[row sep=crcr]{%
0.5	6.5\\
8.5	6.5\\
};
\addplot [color=mycolor1,solid,forget plot]
  table[row sep=crcr]{%
7.5	0.5\\
7.5	8.5\\
};
\addplot [color=mycolor1,solid,forget plot]
  table[row sep=crcr]{%
0.5	7.5\\
8.5	7.5\\
};
\end{axis}

\begin{axis}[%
width=0.192207\figurewidth,
height=\figureheight,
at={(0\figurewidth,0\figureheight)},
scale only axis,
axis on top,
xmin=0.5,
xmax=8.5,
xtick={1,2,3,4,5,6,7,8},
xlabel={rank r},
ymin=0.5,
ymax=8.5,
ytick={1,2,3,4,5,6,7,8},
yticklabels={{2},{4},{8},{16},{32},{64},{128},{256}},
ylabel={$\text{slice density C}_{\text{SD}}$},
title={ALS (d = 4)}
]
\addplot [forget plot] graphics [xmin=0.5,xmax=8.5,ymin=0.5,ymax=8.5] {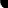};
\addplot [color=mycolor1,solid,forget plot]
  table[row sep=crcr]{%
1.5	0.5\\
1.5	8.5\\
};
\addplot [color=mycolor1,solid,forget plot]
  table[row sep=crcr]{%
0.5	1.5\\
8.5	1.5\\
};
\addplot [color=mycolor1,solid,forget plot]
  table[row sep=crcr]{%
2.5	0.5\\
2.5	8.5\\
};
\addplot [color=mycolor1,solid,forget plot]
  table[row sep=crcr]{%
0.5	2.5\\
8.5	2.5\\
};
\addplot [color=mycolor1,solid,forget plot]
  table[row sep=crcr]{%
3.5	0.5\\
3.5	8.5\\
};
\addplot [color=mycolor1,solid,forget plot]
  table[row sep=crcr]{%
0.5	3.5\\
8.5	3.5\\
};
\addplot [color=mycolor1,solid,forget plot]
  table[row sep=crcr]{%
4.5	0.5\\
4.5	8.5\\
};
\addplot [color=mycolor1,solid,forget plot]
  table[row sep=crcr]{%
0.5	4.5\\
8.5	4.5\\
};
\addplot [color=mycolor1,solid,forget plot]
  table[row sep=crcr]{%
5.5	0.5\\
5.5	8.5\\
};
\addplot [color=mycolor1,solid,forget plot]
  table[row sep=crcr]{%
0.5	5.5\\
8.5	5.5\\
};
\addplot [color=mycolor1,solid,forget plot]
  table[row sep=crcr]{%
6.5	0.5\\
6.5	8.5\\
};
\addplot [color=mycolor1,solid,forget plot]
  table[row sep=crcr]{%
0.5	6.5\\
8.5	6.5\\
};
\addplot [color=mycolor1,solid,forget plot]
  table[row sep=crcr]{%
7.5	0.5\\
7.5	8.5\\
};
\addplot [color=mycolor1,solid,forget plot]
  table[row sep=crcr]{%
0.5	7.5\\
8.5	7.5\\
};
\end{axis}
\end{tikzpicture}%
    \end{minipage}
  \end{center}
  \caption{\label{recplotd4} ($d=4,5$, $r$ varying, $n=12$, $C_{SD}$ varying, constant singular values) 
    Displayed as shades of 
    gray (white $(0)$ to black $(\mbox{all } 20)$) are the number of successful reconstructions for varying 
    target ranks $r = 1,\ldots,8$ and slice densities $C_{SD}=2,4,\ldots,256$ for ALS and ADF.}
\end{figure}

\subsubsection{Quasi-random Tensors with Decaying Singular Values}\label{sec:numrec2}

We base the second group of tests for random tensors on the same quasi-random tensors as above.
However, for each tensor and each matricization we enforce the
singular values to decay exponentially (that is $\sigma_i = 10^{-i}$) by rescaling them. 
We therefore
alternatingly adapt the singular values of the according matricizations 
of the random tensor.
Note that this can be done indirectly 
via the given representation of the random tensor.
The difference to the previous group of tests is that now the smaller singular values
are dominated by the large ones. The results in Figure \ref{recplotd4_decay} 
show a similar behaviour as before with the exception that, with respect to reconstruction 
capability, ADF performs slightly worse in $d=4$ and worse in dimension $d=5$.
 
\begin{figure}[htb]
  \begin{center}
    \begin{minipage}{\linewidth}
    \setlength\figureheight{2.5cm}
    \setlength\figurewidth{0.9\linewidth}
%
%
\definecolor{mycolor1}{rgb}{0.50000,0.50000,0.80000}%
\begin{tikzpicture}

\begin{axis}[%
width=0.192207\figurewidth,
height=\figureheight,
at={(0.505809\figurewidth,0\figureheight)},
scale only axis,
axis on top,
xmin=0.5,
xmax=8.5,
xtick={1,2,3,4,5,6,7,8},
xlabel={rank r},
ymin=0.5,
ymax=8.5,
ytick={1,2,3,4,5,6,7,8},
yticklabels={{2},{4},{8},{16},{32},{64},{128},{256}},
title={ALS (d = 5)}
]
\addplot [forget plot] graphics [xmin=0.5,xmax=8.5,ymin=0.5,ymax=8.5] {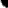};
\addplot [color=mycolor1,solid,forget plot]
  table[row sep=crcr]{%
1.5	0.5\\
1.5	8.5\\
};
\addplot [color=mycolor1,solid,forget plot]
  table[row sep=crcr]{%
0.5	1.5\\
8.5	1.5\\
};
\addplot [color=mycolor1,solid,forget plot]
  table[row sep=crcr]{%
2.5	0.5\\
2.5	8.5\\
};
\addplot [color=mycolor1,solid,forget plot]
  table[row sep=crcr]{%
0.5	2.5\\
8.5	2.5\\
};
\addplot [color=mycolor1,solid,forget plot]
  table[row sep=crcr]{%
3.5	0.5\\
3.5	8.5\\
};
\addplot [color=mycolor1,solid,forget plot]
  table[row sep=crcr]{%
0.5	3.5\\
8.5	3.5\\
};
\addplot [color=mycolor1,solid,forget plot]
  table[row sep=crcr]{%
4.5	0.5\\
4.5	8.5\\
};
\addplot [color=mycolor1,solid,forget plot]
  table[row sep=crcr]{%
0.5	4.5\\
8.5	4.5\\
};
\addplot [color=mycolor1,solid,forget plot]
  table[row sep=crcr]{%
5.5	0.5\\
5.5	8.5\\
};
\addplot [color=mycolor1,solid,forget plot]
  table[row sep=crcr]{%
0.5	5.5\\
8.5	5.5\\
};
\addplot [color=mycolor1,solid,forget plot]
  table[row sep=crcr]{%
6.5	0.5\\
6.5	8.5\\
};
\addplot [color=mycolor1,solid,forget plot]
  table[row sep=crcr]{%
0.5	6.5\\
8.5	6.5\\
};
\addplot [color=mycolor1,solid,forget plot]
  table[row sep=crcr]{%
7.5	0.5\\
7.5	8.5\\
};
\addplot [color=mycolor1,solid,forget plot]
  table[row sep=crcr]{%
0.5	7.5\\
8.5	7.5\\
};
\end{axis}

\begin{axis}[%
width=0.192207\figurewidth,
height=\figureheight,
at={(0.758713\figurewidth,0\figureheight)},
scale only axis,
axis on top,
xmin=0.5,
xmax=8.5,
xtick={1,2,3,4,5,6,7,8},
xlabel={rank r},
ymin=0.5,
ymax=8.5,
ytick={1,2,3,4,5,6,7,8},
yticklabels={{2},{4},{8},{16},{32},{64},{128},{256}},
title={ADF (d = 5)}
]
\addplot [forget plot] graphics [xmin=0.5,xmax=8.5,ymin=0.5,ymax=8.5] {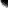};
\addplot [color=mycolor1,solid,forget plot]
  table[row sep=crcr]{%
1.5	0.5\\
1.5	8.5\\
};
\addplot [color=mycolor1,solid,forget plot]
  table[row sep=crcr]{%
0.5	1.5\\
8.5	1.5\\
};
\addplot [color=mycolor1,solid,forget plot]
  table[row sep=crcr]{%
2.5	0.5\\
2.5	8.5\\
};
\addplot [color=mycolor1,solid,forget plot]
  table[row sep=crcr]{%
0.5	2.5\\
8.5	2.5\\
};
\addplot [color=mycolor1,solid,forget plot]
  table[row sep=crcr]{%
3.5	0.5\\
3.5	8.5\\
};
\addplot [color=mycolor1,solid,forget plot]
  table[row sep=crcr]{%
0.5	3.5\\
8.5	3.5\\
};
\addplot [color=mycolor1,solid,forget plot]
  table[row sep=crcr]{%
4.5	0.5\\
4.5	8.5\\
};
\addplot [color=mycolor1,solid,forget plot]
  table[row sep=crcr]{%
0.5	4.5\\
8.5	4.5\\
};
\addplot [color=mycolor1,solid,forget plot]
  table[row sep=crcr]{%
5.5	0.5\\
5.5	8.5\\
};
\addplot [color=mycolor1,solid,forget plot]
  table[row sep=crcr]{%
0.5	5.5\\
8.5	5.5\\
};
\addplot [color=mycolor1,solid,forget plot]
  table[row sep=crcr]{%
6.5	0.5\\
6.5	8.5\\
};
\addplot [color=mycolor1,solid,forget plot]
  table[row sep=crcr]{%
0.5	6.5\\
8.5	6.5\\
};
\addplot [color=mycolor1,solid,forget plot]
  table[row sep=crcr]{%
7.5	0.5\\
7.5	8.5\\
};
\addplot [color=mycolor1,solid,forget plot]
  table[row sep=crcr]{%
0.5	7.5\\
8.5	7.5\\
};
\end{axis}

\begin{axis}[%
width=0.192207\figurewidth,
height=\figureheight,
at={(0.252904\figurewidth,0\figureheight)},
scale only axis,
axis on top,
xmin=0.5,
xmax=8.5,
xtick={1,2,3,4,5,6,7,8},
xlabel={rank r},
ymin=0.5,
ymax=8.5,
ytick={1,2,3,4,5,6,7,8},
yticklabels={{2},{4},{8},{16},{32},{64},{128},{256}},
title={ADF (d = 4)}
]
\addplot [forget plot] graphics [xmin=0.5,xmax=8.5,ymin=0.5,ymax=8.5] {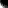};
\addplot [color=mycolor1,solid,forget plot]
  table[row sep=crcr]{%
1.5	0.5\\
1.5	8.5\\
};
\addplot [color=mycolor1,solid,forget plot]
  table[row sep=crcr]{%
0.5	1.5\\
8.5	1.5\\
};
\addplot [color=mycolor1,solid,forget plot]
  table[row sep=crcr]{%
2.5	0.5\\
2.5	8.5\\
};
\addplot [color=mycolor1,solid,forget plot]
  table[row sep=crcr]{%
0.5	2.5\\
8.5	2.5\\
};
\addplot [color=mycolor1,solid,forget plot]
  table[row sep=crcr]{%
3.5	0.5\\
3.5	8.5\\
};
\addplot [color=mycolor1,solid,forget plot]
  table[row sep=crcr]{%
0.5	3.5\\
8.5	3.5\\
};
\addplot [color=mycolor1,solid,forget plot]
  table[row sep=crcr]{%
4.5	0.5\\
4.5	8.5\\
};
\addplot [color=mycolor1,solid,forget plot]
  table[row sep=crcr]{%
0.5	4.5\\
8.5	4.5\\
};
\addplot [color=mycolor1,solid,forget plot]
  table[row sep=crcr]{%
5.5	0.5\\
5.5	8.5\\
};
\addplot [color=mycolor1,solid,forget plot]
  table[row sep=crcr]{%
0.5	5.5\\
8.5	5.5\\
};
\addplot [color=mycolor1,solid,forget plot]
  table[row sep=crcr]{%
6.5	0.5\\
6.5	8.5\\
};
\addplot [color=mycolor1,solid,forget plot]
  table[row sep=crcr]{%
0.5	6.5\\
8.5	6.5\\
};
\addplot [color=mycolor1,solid,forget plot]
  table[row sep=crcr]{%
7.5	0.5\\
7.5	8.5\\
};
\addplot [color=mycolor1,solid,forget plot]
  table[row sep=crcr]{%
0.5	7.5\\
8.5	7.5\\
};
\end{axis}

\begin{axis}[%
width=0.192207\figurewidth,
height=\figureheight,
at={(0\figurewidth,0\figureheight)},
scale only axis,
axis on top,
xmin=0.5,
xmax=8.5,
xtick={1,2,3,4,5,6,7,8},
xlabel={rank r},
ymin=0.5,
ymax=8.5,
ytick={1,2,3,4,5,6,7,8},
yticklabels={{2},{4},{8},{16},{32},{64},{128},{256}},
ylabel={$\text{slice density C}_{\text{SD}}$},
title={ALS (d = 4)}
]
\addplot [forget plot] graphics [xmin=0.5,xmax=8.5,ymin=0.5,ymax=8.5] {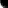};
\addplot [color=mycolor1,solid,forget plot]
  table[row sep=crcr]{%
1.5	0.5\\
1.5	8.5\\
};
\addplot [color=mycolor1,solid,forget plot]
  table[row sep=crcr]{%
0.5	1.5\\
8.5	1.5\\
};
\addplot [color=mycolor1,solid,forget plot]
  table[row sep=crcr]{%
2.5	0.5\\
2.5	8.5\\
};
\addplot [color=mycolor1,solid,forget plot]
  table[row sep=crcr]{%
0.5	2.5\\
8.5	2.5\\
};
\addplot [color=mycolor1,solid,forget plot]
  table[row sep=crcr]{%
3.5	0.5\\
3.5	8.5\\
};
\addplot [color=mycolor1,solid,forget plot]
  table[row sep=crcr]{%
0.5	3.5\\
8.5	3.5\\
};
\addplot [color=mycolor1,solid,forget plot]
  table[row sep=crcr]{%
4.5	0.5\\
4.5	8.5\\
};
\addplot [color=mycolor1,solid,forget plot]
  table[row sep=crcr]{%
0.5	4.5\\
8.5	4.5\\
};
\addplot [color=mycolor1,solid,forget plot]
  table[row sep=crcr]{%
5.5	0.5\\
5.5	8.5\\
};
\addplot [color=mycolor1,solid,forget plot]
  table[row sep=crcr]{%
0.5	5.5\\
8.5	5.5\\
};
\addplot [color=mycolor1,solid,forget plot]
  table[row sep=crcr]{%
6.5	0.5\\
6.5	8.5\\
};
\addplot [color=mycolor1,solid,forget plot]
  table[row sep=crcr]{%
0.5	6.5\\
8.5	6.5\\
};
\addplot [color=mycolor1,solid,forget plot]
  table[row sep=crcr]{%
7.5	0.5\\
7.5	8.5\\
};
\addplot [color=mycolor1,solid,forget plot]
  table[row sep=crcr]{%
0.5	7.5\\
8.5	7.5\\
};
\end{axis}
\end{tikzpicture}%
    \end{minipage}
  \end{center}
  \caption{\label{recplotd4_decay} ($d=4,5$, $r$ varying, $n=12$, $C_{SD}$ varying, decaying singular values) 
    Displayed as shades of 
    gray (white $(0)$ to black $(\mbox{all } 20)$) are the number of successful reconstructions for varying 
    target ranks $r = 1,\ldots,8$ and slice densities $C_{SD}=2,4,\ldots,256$ for ALS and ADF.}
\end{figure}

\subsubsection{Quasi-random Tensors with Decaying Singular Values and Gap}
\begin{figure}[htb]
  \begin{center}
    \begin{minipage}{\linewidth}
    \setlength\figureheight{5cm}
    \setlength\figurewidth{0.9\linewidth}
    \input{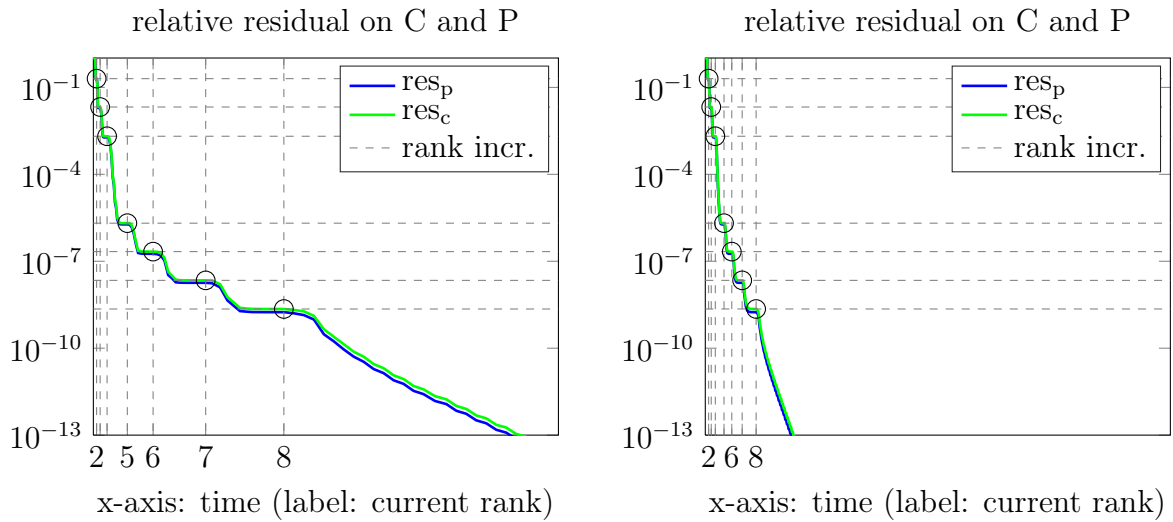}
    \end{minipage}
  \end{center}
  \caption{\label{gapplotr8} ($d=5$, $r=8$, $n=12$, $C_{SD}=10$) Plotted are the relative residuals for 
    one trial of a reconstruction of a tensor with a gap in its exponentially decaying singular values 
    for ALS (left) and ADF (right). 
    Additionally, circles and accordant, dashed lines as well as the x-axis label indicate when the 
    algorithm automatically increases ranks.}
\end{figure}
The third group of tests is again based on the quasi-random tensors of Subsection \ref{sec:numrec2}.
This time, the singular values of each matricization of each tensor are rescaled to 
$\sigma_i = 10^{-i}$ for $i \leq r/2$ and $\sigma_i = 10^{-i-2}$ for $i > r/2$, i.e., there is a gap
in the singular values after the first $r/2$ singular values.
We illustrate the results by two diagrams in Figure \ref{gapplotr8}, in which we plot the residuals 
$res_P$ and $res_C$ on the y-axis against the elapsed time on the x-axis. 
We fix the dimension $d=5$, $r=8$, the mode size $n=12$, and the slice density $C_{SD}=10$.
The dashed vertical and horizontal lines mark the points at which the rank is increased and are labelled
on the x-axis with the corresponding (higher) rank.

We observe that the gap in the singular values is clearly apparent in the approximation 
quality of the reconstruction, both in the given sample set $P$ as well as the control 
set $C$. Each of the residuals drops by three orders of magnitude if the rank approaches
$r/2$. Also, we can see that the residuals in $P$ and $C$ are almost the same, which is 
most likely a special property of random tensors. The comparison shows a clear advantage of
the ADF iteration over the ALS iteration with respect to timing.

\subsection{Reconstruction of a Low Rank Tensor with Noise}

In the fourth group of tests we repeat the ones from Subsection \ref{sec:numrec} 
but with perturbed tensors $\tilde{A} = A + 10^{-4} \nu \mathcal{E}$, where $A$ is 
generated as before and $\nu := \|A\|_P / \sqrt{\#P}$. The perturbation $\mathcal{E}$ is a 
tensor of the same proportions as $A$ and without any prescribed rank structure.
Each of its entries is assigned a uniform random value in $[-1,1]$. A test is considered 
successful if $res_C < 10^{-3}$, where the control set residual is evaluated for $A$ and
not $\tilde{A}$. However, no information about the non perturbed tensor is used in the 
algorithm. The results are identical to those of Subsection \ref{sec:numrec}, i.e. the 
perturbation has no influence on the reconstruction as long as the magnitude is below the
target accuracy. We do not yet have a theoretical justification for this very
pronounced effect and believe that a thorough analysis might reveal more insight.

\subsection{Stochastic Elliptic PDE with Karhunen-Lo\`eve Expansion}\label{subsection:kl}

Our last numerical example is a tensor completion problem based on an elliptic PDE with stochastic coefficient $a$,  
\begin{alignat*}{2}
 - \mbox{div} (a(x,y) \nabla u(x,y)) & = f(x), & \quad (x,y) & \in D \times \Theta, \\
 u(x,y) & = 0  & \quad  (x,y) & \in \partial D \times \Theta,
\end{alignat*}
where $y \in \Theta$ is a random variable and $D = [-1,1]$. The goal is to determine the 
expected value of the average of the solutions $\bar{u}(y):=\int_D u(x,y){\rm dx}$.
We follow the procedure described in \cite{ScGi11,KrStVa14} where first the stochastic
coefficient is replaced by a truncated $d+1$-term Karhunen-Lo\`eve (KL) expansion. Subsequently the 
solution space over the computational domain $D$ is discretised by finite elements
and the $d$ stochastic independent variables are sampled on a uniform grid, which yields 
averaged solutions $A_{i_1,\ldots,i_d} := \bar{u}(i_1,\ldots,i_d)$ depending on the parameters $i_{\mu}$. 
For each parameter combination $(i_1,\ldots,i_d)$, a deterministic problem has to be solved and the
average over all solutions gives the sought expected value. In this example, we choose $f(x) \equiv 1$
and use a finite element space with $m=50$ degrees of freedom.

In Figure \ref{pde_als1} we display the convergence for algebraically decaying KL eigenvalues 
$\sqrt{\lambda_{\mu}} =
(1+\mu)^{-2}$, final rank $r_{final} \in \{4,6,8\}$ for dimension 
$d = 5$, slice density $C_{SD} = 6$ and stopping parameter 
$\varepsilon_{stop}:=5\times 10^{-4}$ (ADF) and 
$\varepsilon_{stop}:=15\times 10^{-4}$ (ALS).
\begin{figure}[htb]
  \begin{center}
    \begin{minipage}{\linewidth}
      \hspace*{-0.7cm}   
      
      \setlength{\figureheight}{4cm}
      \setlength{\figurewidth}{0.9\linewidth}
      \input{tgt4.tikz}
    \end{minipage}
  \end{center}
  \caption{\label{pde_als1} ($d=5$, $r=4,6,8$, $n=100$, $C_{SD} = 6$) 
    Plotted are, for varying target ranks $r_{final} = 4,6,8$, 
    the residual ${res_P}$ (dashed) as well as the
    control residual ${res_C}$ (continous) as functions of the 
    total time (in seconds) for one trial,
    for ALS (black, upper curve) and ADF (blue, lower curve).}
\end{figure}
We observe that both methods eventually find a completed tensor of comparable
approximation quality, both in terms of the residual on $P$ and on $C$. 
The ADF iteration is consistently faster, and with increasing rank $r_{final}$ 
one can clearly see the advantage of the asymptotically lower complexity per 
step. 
\begin{figure}[htb]
  \begin{center}
    \begin{minipage}{\linewidth}
      \hspace*{-0.7cm}   
      
      \setlength{\figureheight}{4cm}
      \setlength{\figurewidth}{0.9\linewidth}
      \input{tgt4_lr.tikz}
    \end{minipage}
  \end{center}
  \caption{\label{pde_als2} ($d=5$, $r=4,6,8$, $n=100$, $C_{SD} = 6$) 
    Plotted are, for varying target ranks $r_{final} = 4,6,8$, 
    the residual ${res_P}$ (dashed) as well as the
    control residual ${res_C}$ (continous) as functions of the 
    relative time $t_{rel}$ for one trial,
    for ALS (black, upper curve) and ADF (blue, lower curve).
    Both methods start with the same initial guess of rank $r_{final}-1$
    obtained by ALS. 
  }
\end{figure}
In the tests in Figure \ref{pde_als2} we try to exclude any effects due to different choices 
of stopping parameters or initial guesses. For this, we start both iterations with the 
same initial guess of rank $r_{final}-1$ obtained from ALS.
Instead of the total time $T$ we measure the relative time $t_{rel} := (T - T_1)/T_1$ 
with respect to the runtime $T_1$ for rank $r_{final}-1$ ALS. 
Again, we observe that ADF is consistently faster.

\subsection{C Implementation}

The C implementation of the ALS and ADF algorithm, which was used for the latter results, 
can be found at
\begin{center}
{\tt http://www.igpm.rwth-aachen.de/personen/kraemer}
\end{center}

\section{Conclusions}\label{sec:conclusions}

In this article, we presented two variants of an alternating least squares algorithm that aim at 
finding a low tensor rank approximation to a tensor whose entries are known only in a small subset 
of all indices.
It is important to use a certain oversampling factor, respectively slice density $\csd$, 
in order to obtain a reasonable reconstruction of the tensor. In our numerical experiments 
it turns out that this factor depends on the dimension but can be decreased with increasing rank.
We obtain successful results already for the almost minimal value $\csd=2$. 
Both, the SOR-type solver ADF as well as the simple (and well known) alternating least squares 
method ALS are able to find 
reconstructions or approximations for moderate rank $r=1,\ldots,14$ and dimension $d=3,\ldots,55$. 
From our experiments we recommend to use the faster ADF algorithm, because the advantage of the 
${\cal O}(r^2d\#P)$ scaling over the
${\cal O}(r^4d\#P)$ scaling of ALS is already visible for rank $r=3$.
A modification or extension is necessary in order to treat varying TT ranks $r_1,\ldots,r_{d-1}$ 
instead of a uniform rank. Also, large mode sizes $n>100$ possibly require smoothness conditions and a 
refined sampling strategy.
The influence of noise on the reconstruction is rather harmless, where the noise can be unstructured
or of rank structure but of smaller magnitude than the desired target accuracy. It seems that the low
rank format introduces an automatic regularization in the same way as the singular value truncation
filters high frequency components.

 \bibliographystyle{plain}
 \bibliography{tensor}

\end{document}